\documentclass{amsart}
\usepackage{graphicx }
\usepackage[all]{xy}
\newcommand{\Z}{\mathbb Z}
\newcommand{\R}{\mathbb R}
\newcommand{\N}{\mathbb N}

\usepackage{enumerate,comment}
\newtheorem{thm}{Theorem}[section]

\newtheorem{lem}[thm]{Lemma}
\newtheorem{prop}[thm]{Proposition}
\newtheorem{cor}[thm]{Corollary}

\newtheorem{prob}[thm]{Problem}

\newtheorem{defn}[thm]{Definition}

\newtheorem{mainthm}[thm]{Main Theorem}

\begin{document}

\title[TAKENS-TYPE RECONSTRUCTION THEOREMS]{Takens-type reconstruction theorems of one-sided dynamical systems on compact metric spaces}

\author[Kato]{Hisao Kato}

\email[Kato]{hkato@math.tsukuba.ac.jp}

\address[Kato]{Institute of Mathematics, University of Tsukuba,
Tsukuba, Ibaraki  305-8571, Japan}

\keywords{Takens' reconstruction theorem of
dynamical systems, one-sided dynamical systems, time-delay embedding, topological dimension, periodic points, chain recurrent points, branched manifolds, Menger manifolds, Sierpi\'nski carpet, Sierpi\'nski gasket,  fractal sets}
\subjclass[2010]{Primary 37C45; Secondary, 37M10, 54H20, 54F45}

\maketitle
{\bf Abstract.} The reconstruction theorem deals with dynamical systems that are given by  a map $T:X\to X$ of a compact metric space $X$ together with an observable $f:X \to \R$ from $X$ to the real line $\R$. In 1981, by use of  Whitney's embedding theorem, Takens proved that if $T:M\to M$ is a diffeomorphism on a compact smooth manifold $M$ with $\dim M=d$, for generic $(T,f)$ there is a bijection between elements $x \in M$ and corresponding sequence $(fT^j(x))_{j=0}^{2d}$, and moreover, in 2002 Takens proved a generalized version  for endomorphisms. 

In natural sciences and physical engineering, there has been an increase in importance of fractal sets and more complicated spaces, and also in mathematics, many topological and  dynamical properties and stochastic analysis of such spaces have been studied. 
In the present paper, by use of some topological methods  we extend the Takens' reconstruction theorems of compact smooth manifolds to reconstruction theorems of one-sided dynamical systems for a large class of compact metric spaces, which contains PL-manifolds, branched manifolds and some fractal sets, e.g. Menger manifolds, Sierpi\'nski  carpet and Sierpi\'nski gasket and dendrites, etc.

\section{Introduction} 
Throughout this paper, all spaces are separable metric spaces
and maps are
continuous
functions. Let $\N$ be the set of all nonnegative integers, i.e., $\N
=\{0,1,2,...\}$ and let $\Z$ be  the set of all integers and $\R$ the real line. 

A map $h:X \to Y$  is an {\it embedding} if $h:X\to h(X)$ is a homeomorphism. 
A pair $(X,T)$ is called a {\it one-sided dynamical system} (abbreviated as {\it dynamical system})  if $X$ is a separable metric space and $T:X \to X$ is any map. Moreover, if $T:X \to X$ is a
homeomorphism, i.e., invertible, then $(X,T)$ is called a {\it two-sided dynamical system}.

Reconstruction of dynamical systems from a scalar time series is a topic that has been extensively studied. The theoretical basis for methods of recovering dynamical systems on compact manifolds from one-dimensional data was studied by Takens \cite{Tak81,Tak02}.  
 In 1981, Takens \cite{Tak81}, by use of Whitney's embedding theorem, proved that under some conditions of (two-sided) diffeomorphisms on a manifold, the dynamical
system can be reconstructed from the observations made with generic functions.

\begin{thm}
{\rm  (Takens' reconstruction theorem for diffeomorphisms)}  Suppose that
$M$ is a compact smooth manifold of dimension $d$. Let
$D^r(M,M)$ be the space of all $C^r$-diffeomorphisms on $M$and $C^r(M,\R)$ the set of
all $C^r$-functions $(r \geq 1)$. If $E$ is the set of all pairs $(T,f) \in D^r(M,M)\times C^r(M,\R)$ 
such that the delay observation map 
$I_{T,f}^{(0,1,2,..,2d)}:M\to \R^{2d+1}$  defined  by 
$$x \mapsto (fT^j(x))_{j=0}^{2d}$$ 
is an embedding, then $E$ is open and dense in $D^r(M,M)\times C^r(M,\R)$.
\end{thm}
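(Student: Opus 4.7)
My plan is to establish openness and density of $E$ separately.

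\emph{Openness.} Since $M$ is compact, the set of $C^r$-embeddings $M\to\R^{2d+1}$ is open in $C^r(M,\R^{2d+1})$ in the $C^r$-topology. The assignment $(T,f)\mapsto I_{T,f}^{(0,\ldots,2d)}$ is continuous for the corresponding $C^r$-topologies, so $E$, being the preimage of this open set, is open.

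\emph{Density.} Because $M$ is compact, to get an embedding it suffices to perturb $(T,f)$ so that $I:=I_{T,f}^{(0,\ldots,2d)}$ is both injective and an immersion. Away from short periodic orbits, a standard multijet transversality argument works: a generic perturbation of $f$ makes the evaluation $(x,y)\mapsto(I(x),I(y))$ on $(M\times M)\setminus\Delta$ transverse to the diagonal of $\R^{2d+1}\times\R^{2d+1}$, which has codimension $2d+1>\dim(M\times M)=2d$, forcing the map to miss the diagonal, while a similar first-order count on the tangent bundle makes $dI$ injective. The obstacle appears at periodic points $x$ with $T^px=x$, $p\le 2d$: the delay vector $(f(T^jx))_{j=0}^{2d}$ is $p$-periodic in $j$ and lies in a $p$-dimensional coordinate subspace, while $dI_x$ factors through only $p$ independent pieces of data along the orbit, so the naive parameter count fails on the set of short periodic orbits.

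I would circumvent this in three steps. First, invoke the Kupka--Smale theorem to perturb $T$ so that every periodic orbit of period $\le 2d+1$ is hyperbolic; the set $P$ of such orbits is then finite. Second, for each periodic orbit $\mathcal{O}=\{x,Tx,\ldots,T^{p-1}x\}\subset P$, perturb $f$ by a $C^r$-small bump supported in disjoint small neighborhoods of the $p$ orbit points; because the values and finitely many jets of $f$ at $x,Tx,\ldots,T^{p-1}x$ can be prescribed independently, a generic choice avoids finitely many Vandermonde-type algebraic conditions and thereby makes $I$ both an immersion at each orbit point and injective on the orbit. Third, on the complement of small neighborhoods of $P$, apply the multijet transversality argument sketched above to achieve injectivity and the immersion property globally.

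The step I expect to be hardest is the second: it replaces a failed transversality argument by an explicit finite-dimensional genericity statement at each short periodic orbit, and one has to verify that the required nondegeneracy is the non-vanishing of a genuine polynomial in the prescribed jet data of $f$. Once this is in hand, coordinating the three perturbations is routine, since the conditions produced in Steps 1--2 are $C^r$-open on neighborhoods of $P$ and so survive a sufficiently small perturbation in Step 3. The resulting $(T',f')$ is arbitrarily $C^r$-close to $(T,f)$, and $I_{T',f'}^{(0,\ldots,2d)}$ is an injective $C^r$-immersion from the compact manifold $M$, hence an embedding, proving density.
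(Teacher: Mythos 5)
The paper does not prove this statement: Theorem 1.1 is quoted as background and attributed to \cite{Tak81}, so the only comparison available is with Takens' original argument, which your sketch follows in outline (openness by continuity of $(T,f)\mapsto I_{T,f}$ plus openness of embeddings of a compact manifold; density by a transversality argument away from low-period orbits plus a separate finite-dimensional analysis at them).

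There is, however, a concrete gap in your Step 2. Hyperbolicity of the periodic orbits of period $\le 2d+1$ is not the right genericity condition, and no perturbation of $f$ alone can repair the immersion property at a periodic point where it fails for linear-algebraic reasons. Concretely, let $x$ be a fixed point and $A=dT_x$. Then $dI_x(v)=\bigl(df_x(v),df_x(Av),\dots,df_x(A^{2d}v)\bigr)$, so $dI_x$ is injective only if no nonzero $A$-cyclic subspace is contained in the hyperplane $\ker df_x$. If $A$ has an eigenvalue of geometric multiplicity $\ge 2$ (e.g.\ $A=2\,\mathrm{id}$ on a $2$-plane, which is perfectly hyperbolic), then every vector in that eigenspace spans a one-dimensional cyclic subspace, and $\ker df_x$ must meet the eigenspace nontrivially; hence $dI_x$ has a kernel for \emph{every} choice of $f$. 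The Vandermonde matrix you allude to is a Vandermonde in the eigenvalues of $DT^p$, and it is invertible only when those eigenvalues are pairwise distinct --- a condition on $T$, not on $f$. This is exactly the hypothesis Takens imposes after his first reduction: the periodic points of period $\le 2d$ are finite in number \emph{and} the eigenvalues of $DT^p$ at each are simple. The fix is a further generic ($C^r$-small) perturbation of $T$ near the finitely many short periodic orbits to make these eigenvalues distinct; with that added, your Step 2 genuinely becomes a finite list of polynomial nondegeneracy conditions on the jet data of $f$ along each orbit, and the rest of your assembly (openness of the conditions near $P$, multijet transversality on the complement, injective immersion of a compact manifold is an embedding) goes through.
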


Moreover, in 2002 Takens \cite{Tak02}, extended his theorem for  
 endomorphisms on compact  smooth manifolds as follows.  

\begin{thm}{\rm  (Takens' reconstruction theorem for endomorphisms)} Suppose that
$M$ is a compact smooth manifold of dimension $d$. Then there is an open dense subset ${\mathcal U}\subset {\rm End}^1(M)\times C^1(M,\R)$, where  $ {\rm End}^1(M)$ denotes the space of all $C^1$-endomorphisms on $M$, such that, whenever $(T,f)\in {\mathcal U}$, there is a map $$\pi:I_{T,f}^{(0,1,..,2d)}(M)\to M$$ with $\pi\cdot I_{T,f}^{(0,1,..,2d)}=T^{2d}$. 

\end{thm}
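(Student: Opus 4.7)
The plan is to establish an open dense set $\mathcal{U}\subset\operatorname{End}^{1}(M)\times C^{1}(M,\R)$ on which $I:=I_{T,f}^{(0,1,\ldots,2d)}$ satisfies the \emph{refinement condition} $I(x)=I(y)\Rightarrow T^{2d}(x)=T^{2d}(y)$. Granted refinement, the map $\pi$ is defined unambiguously by $\pi(I(x)):=T^{2d}(x)$; it satisfies $\pi\circ I=T^{2d}$ tautologically, and is automatically continuous since $I$ is a closed surjection from the compact $M$ onto $I(M)$. The entire theorem therefore reduces to the genericity of the refinement condition.

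For density I would run a parametric transversality argument on $M\times M$. Set
\[
g_{T,f}\colon M\times M\to\R^{2d+1},\qquad g_{T,f}(x,y):=I_{T,f}(x)-I_{T,f}(y),
\]
and let $B(T):=\{(x,y):T^{2d}(x)\neq T^{2d}(y)\}$, an open subset of $M\times M$. The refinement condition is $g_{T,f}^{-1}(0)\cap B(T)=\emptyset$, and since $\dim(M\times M)=2d<2d+1=\operatorname{codim}_{\R^{2d+1}}\{0\}$, transverse preimages have negative expected dimension and must be empty. I would apply the Abraham--Thom parametric transversality theorem to the evaluation map $(T,f,x,y)\mapsto g_{T,f}(x,y)$; the required input is that the partial derivative in the $(T,f)$-directions is surjective onto $\R^{2d+1}$ at each zero of $g_{T,f}$ lying over $B(T)$.

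The main obstacle, absent from the 1981 diffeomorphism case, is that endomorphism orbits can merge or self-intersect, coupling the $2d+1$ coordinates of $g_{T,f}$ under perturbations. Indeed, a pure $f$-perturbation contributes $(\delta f(T^{j}x)-\delta f(T^{j}y))_{j=0}^{2d}$, and the $j$-th slot is identically zero whenever $T^{j}x=T^{j}y$, so $f$-bumps alone cannot be surjective. The remedy is to combine with $T$-perturbations. Let $j^{*}:=\max\{j\leq 2d: T^{j}x\neq T^{j}y\}$, which exists because $(x,y)\in B(T)$; a $C^{1}$-small bump of $T$ supported near $T^{j^{*}}x$ destroys the forced equality $T^{j^{*}+1}x=T^{j^{*}+1}y$ to first order in the perturbation parameter, after which localized $f$-bumps at $T^{j}x$ for $j>j^{*}$ independently adjust those slots, while $f$-bumps at the already-separated $T^{j}x$, $j\leq j^{*}$, adjust the remaining ones. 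Peeling off indices starting at $j^{*}$ and working outward produces a triangular perturbation system solvable for any target vector in $\R^{2d+1}$, giving the required surjectivity.

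Once local surjectivity is verified, density of $\mathcal{U}$ follows from Abraham--Thom. For openness one strengthens the refinement to the Lipschitz-type bound $\|g_{T,f}(x,y)\|\geq c\cdot d_{M}(T^{2d}x,T^{2d}y)$ for some $c>0$; this stronger condition is manifestly open in the $C^{1}$-topology by compactness of $M\times M$ and continuity, and the same transversality construction, read at the level of first jets along the fibers of $T^{2d}$, delivers it generically. Combining density and openness yields the desired open dense $\mathcal{U}$.
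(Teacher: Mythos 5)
First, a point of comparison: the paper does not prove this statement at all; it is quoted as background from Takens [Tak02], so there is no in-paper proof to measure your attempt against. Your opening reduction is nonetheless correct and is exactly what the paper records in Remark 1(1): the existence of $\pi$ is equivalent to the refinement condition ($I(x)=I(y)\Rightarrow T^{2d}(x)=T^{2d}(y)$, i.e.\ to $I_{T,f}^{(0,\ldots,2d)}$ being a trajectory-embedding), and continuity of $\pi$ is automatic because a continuous surjection of compact metric spaces is a quotient map.

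The transversality step, however, has a genuine gap, and the obstacle you single out is not the one that actually occurs. On $B(T)=\{(x,y):T^{2d}x\neq T^{2d}y\}$ one automatically has $T^jx\neq T^jy$ for \emph{every} $j\le 2d$ (otherwise applying $T^{2d-j}$ would force $T^{2d}x=T^{2d}y$), so no slot of the $f$-derivative vanishes for the reason you give, your $j^{*}$ always equals $2d$, and the ``forced equality'' $T^{j^{*}+1}x=T^{j^{*}+1}y$ you propose to destroy lies outside the observation window. The coupling that genuinely defeats independent $f$-bumps is collisions \emph{within and between} the finite orbit segments, i.e.\ $T^jx=T^{j'}x$ or $T^jx=T^{j'}y$ for $j\neq j'$ --- most importantly, periodic points of period $\le 2d$. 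In the extreme case where $x$ and $y$ are distinct fixed points, every slot of $g_{T,f}(x,y)$ equals $f(x)-f(y)$, the $f$-derivative has rank one, and surjectivity of the full parametrized differential onto $\R^{2d+1}$ then hinges on the eigenvalues of $DT(x)$ and $DT(y)$ being pairwise distinct and distinct from $1$, and on $df$ not annihilating the eigendirections (a Vandermonde computation). Arranging such spectral and nondegeneracy conditions at all periodic points of period $\le 2d$ (and controlling the critical set of the endomorphism) is itself a preliminary genericity step --- precisely the hard part of Takens' 1981 and 2002 arguments --- whereas your ``triangular perturbation system'' tacitly assumes the $4d+2$ points $T^jx,T^jy$ are pairwise distinct, in which case disjointly supported $f$-bumps already suffice and no triangular structure is needed. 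The openness claim is also not ``manifest'': if $\|g_{T,f}(x,y)\|\ge c\,d(T^{2d}x,T^{2d}y)$ holds everywhere, a $C^1$-small perturbation only yields $\|g_{T',f'}(x,y)\|\ge c\,d((T')^{2d}x,(T')^{2d}y)-(\epsilon+c\epsilon')$, which gives nothing where $d((T')^{2d}x,(T')^{2d}y)$ is small; a genuine two-sided (bi-Lipschitz) transversality statement, again resting on the periodic-point analysis, is needed there.
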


Embeddings of two-sided dynamical systems in the two-sided shift $(\R^{\Z},\sigma)$ have been studied by many authors (e.g. see \cite{Coo15, Gut15, Gut16, GQS18, GT14, Jaw74, Lin99, LW00, Ner91, SYC91, Tak81}).

In \cite{Kat20}, we studied embeddings of one-sided dynamical systems  in the one-sided shift $(\R^{\N},\sigma)$.
 In this paper, by use of the topological methods introduced in the  paper \cite{Kat20}, we extend the above Takens' reconstruction theorems of dynamical systems on compact manifolds to  theorems of  one-sided dynamical systems for a large class of compact metric spaces.  The main results of this paper are Theorem 5.4 and Corollary 6.9. 
 
  In this paper, we do not assume injectivity 
of $T$ and  so the proofs of our results cannot any longer rely on the embedding theorems of Whitney and Menger-N\"{o}beling \cite{Eng95}.  Instead, an essential role is played by the  notion defined in Definition 2.1.  \\
\\

\section{Definitions and notations}

For a space $X$, $\dim X$ means the topological (covering)
dimension of $X$
(e.g. see \cite{Eng95}, \cite{HW41} and \cite{Nag65}).  
Let $X$ be compact metric space and $Y$ a space with a complete
metric $d_Y$.
Let $C(X,Y)$ denote the space consisting of all maps $f: X \to
Y$. We equip
$C(X,Y)$ with the metric $d$ defined by $$d(f,g)=\sup_{x\in
X}d_Y(f(x),g(x)).$$
Recall that $C(X,Y)$ is a complete metric space and hence
Baire's category
theorem holds in $C(X,Y)$.

A map $g:X \to Y$ of separable metric spaces is {\it $n$-dimensional}
~$(n=0,1,2,...)$ if $\dim g^{-1}(y)\leq n$ for
each $y \in Y$.  Note that a closed map $g:X \to Y$ is 0-dimensional if and only if for any 0-dimensional subset $D$ of $Y$, $\dim g^{-1}(D)\leq 0 $ (see \cite[Hurewic's theorem (1.12.4)]{Eng95}). A map $T:X\to X$ is {\it doubly 0-dimensional} if for each closed set $A\subset X$ of dimension 0, one has $\dim T^{-1}(A) \leq 0$ and $\dim T(A)=0$. 

 If
$K$ is a subset of a space $X$, then $\mathrm{cl}(K)$,
$\mathrm{bd}(K)$ and
$\mathrm{int}(K)$ denote the closure, the boundary and the
interior of
$K$ in $X$, respectively. A subset $A$ of a space $X$ is an
$F_{\sigma}$-set of
$X$ if $A$ is a countable union of closed subsets of $X$. Also,
a subset $B$ of
$X$ is a $G_{\delta}$-set of $X$ if $B$ is an intersection of
countably many  open
subsets of $X$.

An indexed family $(C_s)_{s\in S}$ of subsets of a set $X$ will by abuse  of notation also be denoted by $\{C_s\}_{s\in S}$ or $\{C_s: s\in S\}$. Hence if $\mathcal{C}=\{C_s\}_{s\in S}$ is such a family then its members $C_s$ and $C_t$ will be considered as different whenever $s\ne t$. We then put
$$\mathrm{ord}(\mathcal{C})= \sup \{\mathrm{ord}_x(\mathcal{C}): x\in X\} 
\mbox{, where } \mathrm{ord}_x (\mathcal{C})=|\{s\in S|~x \in C_s\}|.$$
Note that $\mathrm{ord}(\mathcal{C})$ so defined is by  1 larger than it would be according to the usual definition, as e.g. in \cite[(1.6.6) Definition]{Eng95}.  \\

Modifying the definition of TSP in \cite{Kat20}, we define the notion of 
$(k,\eta)$ trajectory-separation property for $k \in \N$ and $\eta >0$ which is very  important in this paper.  
 
\begin{defn}
Let $T: X \to X$ be a map of a compact metric space $X$ with
$\dim X = d < \infty$ and let $k\in \N, \eta >0$. Then $T$ has the $(k,\eta)$ {\it
trajectory-separation property} $((k,\eta)$-TSP for short$)$ provided that
there is a closed set $H$ of $X$
such that \\
$(1)$ $X\setminus  H$ is a union of finitely many disjoint open sets
 of  diameter at most $\eta$,  and \\
$(2)$ $\mathrm{ord}\{T^{-p}(H)\}_{p=0}^k \leq d$.

\end{defn} 

\section{reconstruction spaces of dynamical systems}

For a space $K$, we consider the (one-sided) shift $\sigma:
K^{\N} \to
K^{\N}$ which is defined by
$$\sigma (x_0,x_1,x_2,x_3....) = (x_1,x_2,x_3....), ~x_i \in
K.$$ 
Let $(X,T)$ and $(X',T')$ be dynamical systems.  If a map $h: X\to X'$  satisfies $hT=T'h$, 
then we say that  $h: (X,T)\to (X',T')$ is a {\it morphism} of dynamical systems.

In this paper, we need the following definition from \cite{Kat20}. 

\begin{defn} Let $T:X\to X$ be a map of a compact metric space $X$.\\
$(a)$ Given a set $S \subset \N$ and a map $f:X \to \R$, the map $(fT^j)_{j\in S}: X \to \R^S$ will be denoted by $I^S_{T,f}$. We call this map {\it the delay observation map at times} $j\in S$.  Note that $I_{T,f}:=I_{T,f}^{\N}:(X,T)\to (\R^{\N},\sigma)$ is a morphism of dynamical systems. We call $I_{T,f}$ the infinite delay observation map for $(T,f)$. \\
$(b)$ We say that $I_f^S$ is a {\it trajectory-embedding} if  $I_f^S(x)\neq I_f^S(y)$ whenever $T^j(x)\neq T^j(y)$ for all $j \in S$. 
\end{defn}

Remark 1. $(1)$ In the statement of Theorem 1.2, the existence of such a map $\pi$ is equivalent to that $I_{T,f}^{(0,1,...,2d)}$ is a trajectory-embedding. \\
$(2)$  In the statement $(b)$ of Definition 3.1, for the case where $T:X\to X$ is injective, 
$I_{f}^S$ is an embedding if and only if $I_{f}^S$ is a trajectory-embedding. \\

 Let $(X,T)$ be a dynamical system of a compact metric space $X$. 
For $n\geq 1$, let $P_n(T)$ be the set of all periodic points of $T$
with period $\leq n$
 and $P(T)$ the set of all periodic points of $T$, i.e. 
$$P_n(T)=\{x\in X|~\mbox{there is an}~i ~\mbox{such that}~1\leq i\leq
n~\mbox{and}~T^{i}(x)=x\} $$
$$\mbox{and}~ P(T)=\bigcup_{n\geq 1}P_n(T).$$
Two points $x$ and $y$ of $X$ are {\it trajectory-separated} for $T$ if $T^j(x)\neq T^j(y)$ 
for $j\in \N$. A morphism $h:(X,T)\to (X',T')$ is a {\it trajectory-monomorphism} if 
 $h(x), h(y)$ are trajectory-separated for $T'$, whenever $x, y \in X$ are 
  trajectory-separated for $T$. 
   
 For $x,y \in X$, let $o_T (x)=(T^i(x))_{i\in \N}$ and $o_T (y) = (T^i(y))_{i\in \N}$ be two orbits of $T$. We say that the orbit $o_T(x)$  is
{\it eventually equivalent} to the orbit $o_T(y)$ if the orbits will be equal in the future, i.e.,
there exists an $n \in \N$ such that $T^i(x)=T^i(y)$  for each $i\geq n$. In this case, we
wright $o_T(x) \sim_e o_T(y)$. We see that this relation is an equivalence relation. So we
have the equivalence class 
$$[o_T(x)]=\{o_T(y)|~ o_T(x) \sim_e o_T(y)\}$$
containing $o_T (x)$ and we put
$$[O(T)] = \{[o_T (x)]|~ x \in X\}.$$ Note that if $T:X \to X$ is injective, the function $o:X\to [O(T)]$ defined by $x \mapsto [o_T(x)]$ is bijective, i.e.,  $o: X \cong [O(T)]$.  
Also, note that if $h:(X,T)\to (X',T')$ is a morphism of dynamical 
systems, then $h$ induces the function $h:[O(T)] \to  [O(T')]$ 
defined by $h([o_T(x)])=[o_{T'}(h(x))]$ for $x\in X$. A morphism $h:(X,T)\to (X',T')$ of dynamical systems is a {\it trajectory-isomorphism} if $h$ induces the bijection $h:[O(T)] \cong  [O(T')]$.  

\begin{prop}
Suppose that a morphism  $h: (X,T)\to (X',T')$ is a trajectory-monomorphism and  $h$ is surjective, i.e., $h(X)=X'$.  Then  $h$ is a trajectory-isomorphism:  
$$h:[O(T)]\cong [O(T')]$$ 
\end{prop}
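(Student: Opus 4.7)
The plan is to verify that the induced map $h:[O(T)]\to[O(T')]$ is both injective and surjective, with injectivity being essentially the contrapositive of the trajectory-monomorphism hypothesis once one notices the duality between trajectory-separation and eventual equivalence of orbits.

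The key preliminary observation I would record first is: two points $u,v\in X$ are trajectory-separated for $T$ if and only if $o_T(u)\not\sim_e o_T(v)$. One direction is immediate from the definitions; for the other, if $T^j(u)=T^j(v)$ for some $j\in\N$, then applying $T^k$ to both sides yields $T^{j+k}(u)=T^{j+k}(v)$ for every $k\geq 0$, so the two orbits coincide from index $j$ onward and are therefore eventually equivalent. In other words, $[o_T(u)]=[o_T(v)]$ is equivalent to the existence of \emph{some} common time at which the two trajectories meet.

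With this observation, injectivity of $h:[O(T)]\to[O(T')]$ is immediate: suppose $h([o_T(x)])=h([o_T(y)])$, i.e., $[o_{T'}(h(x))]=[o_{T'}(h(y))]$. Then $h(x)$ and $h(y)$ fail to be trajectory-separated for $T'$, so by the contrapositive of the trajectory-monomorphism property, $x$ and $y$ fail to be trajectory-separated for $T$, and hence $[o_T(x)]=[o_T(y)]$. For surjectivity, given any class $[o_{T'}(x')]\in[O(T')]$, use surjectivity of $h:X\to X'$ to pick some $x\in h^{-1}(x')$. The intertwining relation $hT=T'h$ gives $h(T^j(x))={T'}^j(h(x))={T'}^j(x')$ for all $j\in\N$, so $o_{T'}(h(x))=o_{T'}(x')$ and thus $h([o_T(x)])=[o_{T'}(x')]$.

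There is no real obstacle here; the proposition is essentially a formal unwinding of the two hypotheses. The only mildly subtle point is the duality observation in the second paragraph, which uses that $T$ is a one-sided semigroup action so that coincidence at one time forces coincidence at every later time — this is what makes the negation of "trajectory-separated" coincide with eventual orbit equivalence rather than something weaker.
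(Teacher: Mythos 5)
Your proof is correct and follows essentially the same route as the paper's (very terse) proof: injectivity of the induced map on orbit classes from the trajectory-monomorphism hypothesis, surjectivity from surjectivity of $h$. Your preliminary observation --- that two points fail to be trajectory-separated precisely when their orbits are eventually equivalent, because coincidence at one time propagates to all later times --- is exactly the step the paper leaves implicit, and you have verified it correctly.
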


\begin{proof}
Since  $h$ is a trajectory-monomorphism, $h$ induces an injective  function from $[O(T)]$ to $[O(T')]$. 
Also $h$ induces a surjective function from $[O(T)]$ onto $[O(T')]$, because that 
$h$ is a surjective function.
\end{proof}

We need the definition of topological entropy and we give the definition  by Bowen \cite{Bow78}. 
Let $T:X\to X$ be any map of a compact metric space $X$. 
A subset $E$ of X is $(n,\epsilon)$-separated
if for any $x,y \in E$ with $x\neq y$, there is an integer $j$ 
 such that $0 \leq j < n$ and
$d(T^j(x),T^j(y)) \geq \epsilon$. If $K$ is any nonempty closed subset of $X$, 
$s_n(\epsilon;K)$ denotes the
largest cardinality of any set $E\subset K$ which is $(n,\epsilon)$-separated. 
Also we define 
$$s(\epsilon;K)=\limsup_{n\to \infty}\frac{1}{n}\log s_n(\epsilon;K),$$
$$h(T;K)=\lim_{\epsilon \to 0}s(\epsilon;K).$$
It is well known that the topological entropy $h(T)$ of $T$ is equal to $h(T;X)$ (see
\cite{Bow78}).

Now, we will introduce the notion of {\it reconstruction space}  of   dynamical systems. 

\begin{defn} 
A compact metric space $X$ is a reconstruction space of   dynamical systems if there exists a $G_{\delta}$-dense set $E$ of $C(X,X)\times C(X,\R)$ such that for $(T,f) \in E$,  the infinite delay observation map  $$I_{T,f}:=I_{T,f}^{\N}:(X,T) \to (\R^{\N},\sigma)$$  satisfies the following conditions (1) and (2):\\ 
$(1)$ $I_{T,f}:[O(T)] \cong [O(\sigma_{T,f})]$, where  $\sigma_{T,f}=\sigma|I_{T,f}(X)$, and \\
$(2)$ $h(T)=h(\sigma_{T,f})$.
\end{defn}

We show that many compact metric spaces (e.g. PL-manifolds, branched manifolds, Menger manifolds, Sierpi\'nski carpet, Sierpi\'nski gasket and many fractal sets) are reconstruction spaces of dynamical systems. Our result means that almost all dynamical systems $(X,T)$ on a reconstruction space $X$ can be reconstructed from (observation) maps $f:X\to \R$ in the sense of  `eventually equivalent orbits', and so it forms a bridge between the theory of nonlinear dynamical
systems and nonlinear time series analysis.\\

\section{Trajectory-embeddings in $(\R^{\N},\sigma)$}
In this section, we study some fundamental properties of trajectory-embeddings.   

\begin{prop} Let $(X,T)$ be a dynamical system and $f:X\to \R$ a map. Let  $k\in \N$ and suppose that  
$I_{T,f}^{(0,1,..,k)}:X\to \R^{k+1}$ is a trajectory-embedding. 
Then the following properties  $(1)$-$(3)$ hold.  \vspace{2mm}\\
$(1)$ 
There is the unique map  $\sigma_{T,f}^{(0,1,..,k)}: I_{T,f}^{(0,1,..,k)}(X)\to I_{T,f}^{(0,1,..,k)}(X)$ such that    
 $I_{T,f}^{(0,1,..,k)}T=\sigma_{T,f}^{(0,1,..,k)}I_{T,f}^{(0,1,..,k)}$.

In other words, the  map $\sigma_{T,f}^{(0,1,..,k)}$ defined by $(fT^i(x))_{i=0}^k) \mapsto (fT^i(x))_{i=1}^{k+1}) ~ (x \in X)$ is well-defined. 
And $I_{T,f}^{(0,1,..,k)}:(X,T)\to (I_{T,f}^{(0,1,..,k)}(X),\sigma_{T,f}^{(0,1,..,k)})$ is a  trajectory-isomorphism. In particular,  $I_{T,f}:=I_{T,f}^{\N}:(X,T) \to (R^{\N},\sigma)$ is a 
 trajectory-monomorphism. \vspace{1mm}\\
$(2)$ Let  $p_{(0,1,..,k)}:\R^{\N} \to \R^{k+1}$ be the projection defined by $(x_i)_{i\in \N} \mapsto (x_i)_{i=0}^{k}$. Then   
$p_{(0,1,..,k)}:(I_{T,f}(X),\sigma_{T,f}) \to (I_{T,f}^{(0,1,..,k)}(X),\sigma_{T,f}^{(0,1,..,k)})$ is an isomorphism of dynamical systems, i.e., $p_{(0,1,..,k)}$ is a homeomorphism.  \vspace{1mm}\\
$(3)$ $h(T)=h(\sigma_{T,f})=h(\sigma_{T,f}^{(0,1,..,k)})$.
\end{prop}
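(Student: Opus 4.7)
For part (1), the crux is well-definedness of the assignment $(fT^i(x))_{i=0}^{k}\mapsto (fT^i(x))_{i=1}^{k+1}$. If $I_{T,f}^{(0,1,..,k)}(x)=I_{T,f}^{(0,1,..,k)}(y)$, the trajectory-embedding hypothesis, read contrapositively, supplies some $j\in\{0,1,\ldots,k\}$ with $T^j(x)=T^j(y)$, so $T^{i}(x)=T^{i}(y)$ for every $i\geq j$; in particular $fT^{k+1}(x)=fT^{k+1}(y)$, so the formula is independent of the representative $x$. Continuity of $\sigma_{T,f}^{(0,1,..,k)}$ then follows because $I_{T,f}^{(0,1,..,k)}:X\to I_{T,f}^{(0,1,..,k)}(X)$ is a continuous surjection between compact metric spaces, hence a quotient map, and $I_{T,f}^{(0,1,..,k)}\circ T$ is continuous; uniqueness is immediate from surjectivity. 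To get the trajectory-isomorphism conclusion, by Proposition 3.2 it is enough to verify trajectory-monomorphism: if $x,y$ are trajectory-separated for $T$, then for every $j\in\N$ and every $i\in\{0,\ldots,k\}$ one has $T^{i+j}(x)\neq T^{i+j}(y)$, so the trajectory-embedding property applied to $T^j(x),T^j(y)$ gives $I_{T,f}^{(0,1,..,k)}(T^j(x))\neq I_{T,f}^{(0,1,..,k)}(T^j(y))$, i.e.\ the images are trajectory-separated under $\sigma_{T,f}^{(0,1,..,k)}$. A nearly identical argument, using that equality of infinite delay vectors forces equality of the first $k+1$ coordinates, establishes the last claim that $I_{T,f}$ is a trajectory-monomorphism into $(\R^{\N},\sigma)$.

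Part (2) is largely formal. Commutativity $p_{(0,1,..,k)}\sigma_{T,f}=\sigma_{T,f}^{(0,1,..,k)}p_{(0,1,..,k)}$ is immediate since both sides send $I_{T,f}(x)$ to $I_{T,f}^{(0,1,..,k)}(T(x))$, and surjectivity onto the images is tautological. For injectivity, suppose $I_{T,f}^{(0,1,..,k)}(x)=I_{T,f}^{(0,1,..,k)}(y)$; trajectory-embedding provides $j\leq k$ with $T^j(x)=T^j(y)$, so $fT^i(x)=fT^i(y)$ for every $i\geq j$, and for $i\leq k$ this equality holds by hypothesis, so $I_{T,f}(x)=I_{T,f}(y)$. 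A continuous bijection between compact metric spaces is a homeomorphism, proving (2).

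For part (3), by (2) the two systems are topologically conjugate, so $h(\sigma_{T,f})=h(\sigma_{T,f}^{(0,1,..,k)})$, and it remains to show $h(T)=h(\sigma_{T,f}^{(0,1,..,k)})$. The inequality $h(\sigma_{T,f}^{(0,1,..,k)})\leq h(T)$ is the standard factor-map bound. For the reverse I invoke Bowen's fiber formula
$$h(T)\leq h(\sigma_{T,f}^{(0,1,..,k)})+\sup_{y}h\bigl(T;(I_{T,f}^{(0,1,..,k)})^{-1}(y)\bigr)$$
and show that each fiber entropy vanishes. The key observation is that whenever $x,y$ lie in the same fiber, trajectory-embedding gives $j\leq k$ with $T^j(x)=T^j(y)$, whence $T^k(x)=T^{k-j}(T^j(x))=T^{k-j}(T^j(y))=T^k(y)$; thus $T^k$ collapses each fiber to a single point. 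Consequently any $(n,\epsilon)$-separated subset of a fiber can only be separated by one of the first $k$ iterates, giving $s_n(\epsilon;(I_{T,f}^{(0,1,..,k)})^{-1}(y))\leq s_k(\epsilon;X)<\infty$ uniformly in $n$ and $y$, so $h(T;(I_{T,f}^{(0,1,..,k)})^{-1}(y))=0$.

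The principal technical burden is part (3); parts (1) and (2) are essentially unpackings of the trajectory-embedding hypothesis. The insight that drives (3) is the fiber-collapsing identity $T^k\bigl((I_{T,f}^{(0,1,..,k)})^{-1}(y)\bigr)=\{\mathrm{pt}\}$, which is exactly what zeros out the Bowen fiber term; after noticing this, one only needs to cite Bowen's theorem. The one point I would want to double-check is the precise statement of Bowen's fiber inequality in the endomorphism (non-invertible) setting, to make sure it applies verbatim to our surjective morphism between compact metric spaces.
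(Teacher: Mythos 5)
Your proposal is correct and follows essentially the same route as the paper: the contrapositive of the trajectory-embedding hypothesis yields $T^k(x)=T^k(y)$ on each fiber, which drives well-definedness in (1), injectivity of $p_{(0,1,..,k)}$ in (2), and the vanishing of the fiber entropy term in Bowen's inequality in (3). The extra details you supply (continuity of $\sigma_{T,f}^{(0,1,..,k)}$ via the quotient-map argument, the explicit bound $s_n(\epsilon;\cdot)\le s_k(\epsilon;X)$, and the appeal to Proposition 3.2 for the trajectory-isomorphism) are all sound and only make explicit what the paper leaves implicit; the version of Bowen's inequality cited ([MS93, Theorem 7.1]) does apply to non-invertible continuous maps of compact metric spaces, so your flagged concern is not an issue.
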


\begin{proof} We prove $(1)$. 
Let $x, y\in X$ with $(fT^i(x))_{i=0}^k=(fT^i(y))_{i=0}^k$. We show that $(fT^i(x))_{i=1}^{k+1}=(fT^i(y))_{i=1}^{k+1}$. 
 Since $I_{T,f}^{(0,1,..,k)}$ is  a trajectory-embedding, we see that $T^k(x)=T^k(y)$. In particular, $T^{k+1}(x)=T^{k+1}(y)$ and so $fT^{k+1}(x)=fT^{k+1}(y)$. This implies that $(fT^i(x))_{i=1}^{k+1}=(fT^i(y))_{i=1}^{k+1}$. Thus  $\sigma_{T,f}^{(0,1,..,k)}$ is well-defined. Also, since $I_{T,f}^{(0,1,..,k)}:X \to \R^{k+1}$ is a trajectory-embedding, we see that the morphism 
$$I_{T,f}^{(0,1,..,k)}:(X,T)\to (I_{T,f}^{(0,1,..,k)}(X),\sigma_{T,f}^{(0,1,..,k)})$$ is a trajectory-isomorphism. 

We will prove $(2)$. Note that 
$p_{(0,1,..,k)}(I_{T,f}(X))=I_{T,f}^{(0,1,..,k)}(X)$. Suppose that $I_{T,f}^{(0,1,..,k)}(x)=I_{T,f}^{(0,1,..,k)}(y)~(x, y \in X)$, i.e., 
$(fT^i(x))_{i=0}^{k}=(fT^i(y))_{i=0}^{k}$. Since $I_{T,f}^{(0,1,..,k)}$ is  a trajectory-embedding, as before we see that  $T^k(x)=T^k(y)$ and so 
$$I_{T,f}(x)=(fT^i(x))_{i \in \N}=(fT^i(y))_{i \in \N}=I_{T,f}(y).$$ This implies that $p_{(0,1,..,k)}: I_{T,f}(X)\to I_{T,f}^{(0,1,..,k)}(X)$ is a homeomorphism. 

We prove (3).  Recall that $I_{T,f}^{(0,1,..,k)}T=\sigma_{T,f}^{(0,1,..,k)}I_{T,f}^{(0,1,..,k)}$. 
By Bowen's theorem (e.g. see \cite[Theorem 7.1]{MS93}), we have 
$$h(\sigma_{T,f}^{(0,1,..,k)}))\leq h(T)\leq h(\sigma_{T,f}^{(0,1,..,k)})+\sup\{h(T;(I_{T,f}^{(0,1,..,k)})^{-1}(z))|~z\in Z\}.$$
Let $z=(fT^i(x))_{i=0}^k~(x\in X)$. Since $I_{T,f}^{(0,1,..,k)}: X \to \R^{k+1}$ is a trajectory-embedding, we see that 
$$T^{k}((I_{T,f}^{(0,1,..,k)})^{-1}(z))=\{T^k(x)\}$$ is a one point set, and  so $h(T;(I_{T,f}^{(0,1,..,k)})^{-1}(z))=0$. Hence 
$h(T)=h(\sigma_{T,f}^{(0,1,..,k)})$. By (2), $h(T)=h(\sigma_{T,f}^{(0,1,..,k)})=h(\sigma_{T,f})$. 
\end{proof}

 By Proposition 4.1 and \cite[Theorem 3.1]{Kat20}, we have the following result.

\begin{thm}
Let $X$ be a compact metric space with
$\dim X = d <
\infty$
and let $T: X \to X$ be  a doubly 0-dimensional map with $\dim
P(T) \leq 0$. Then there is a dense $G_{\delta}$-set $D$ of
$C(X,\R)$ such that for all $f \in D$, 
$$I_{T,f}=T_{T,f}^{\N}:(X,T) \to (\R^{\N},\sigma)$$  satisfies the three conditions: \\
$(a)$ $I_{T,f}:[O(T)] \cong [O(\sigma_{T,f})]$, \\
$(b)$ $h(T)=h(\sigma_{T,f})$ and \\
$(c)$ if $x, y\in X$ are trajectory-separated for $T$, then 
$$|\{i\in \N|~I_{T,f}(x)_i=I_{T,f}(y)_i\}|\leq 2d.$$

\end{thm}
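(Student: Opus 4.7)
The strategy is to lift the finite-window reconstruction statement supplied by \cite[Theorem 3.1]{Kat20} to the full infinite delay observation map $I_{T,f}$ using Proposition 4.1. The plan is to extract from that cited result exactly enough generic behaviour of $f$ on the window of length $2d+1$, and then read off the infinite-window conclusions via the shift-commuting machinery already built above in this section.

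First I would invoke \cite[Theorem 3.1]{Kat20} applied to the fixed map $T$, which by hypothesis is doubly 0-dimensional with $\dim P(T)\le 0$. That theorem is built around the $(k,\eta)$-TSP notion of Definition 2.1 and produces, at the critical window length $k=2d$, a dense $G_{\delta}$-set $D\subset C(X,\R)$ such that each $f\in D$ simultaneously delivers (i) the trajectory-embedding property of the finite-window map $I_{T,f}^{(0,1,\dots,2d)}\colon X\to\R^{2d+1}$, and (ii) the coincidence bound that is exactly condition (c) of the theorem. Item (ii) already establishes (c), so nothing further is needed for that clause.

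Next I would feed item (i) into Proposition 4.1 with $k=2d$. Part (1) of the proposition turns the trajectory-embedding of the finite window into a trajectory-isomorphism
\[
I_{T,f}^{(0,\dots,2d)}\colon [O(T)]\cong [O(\sigma_{T,f}^{(0,\dots,2d)})].
\]
Part (2) asserts that the projection $p_{(0,\dots,2d)}\colon (I_{T,f}(X),\sigma_{T,f})\to (I_{T,f}^{(0,\dots,2d)}(X),\sigma_{T,f}^{(0,\dots,2d)})$ is an isomorphism of dynamical systems, hence it induces a bijection on the orbit-equivalence-class spaces. Composing, the infinite delay map $I_{T,f}$ induces the bijection $[O(T)]\cong[O(\sigma_{T,f})]$ required in (a). Part (3) of the proposition then gives (b) at once: $h(T)=h(\sigma_{T,f}^{(0,\dots,2d)})=h(\sigma_{T,f})$.

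The genuinely hard part is Step 1 — producing the generic $f$ so that the finite-window map trajectory-separates points with the sharp $2d$-coincidence bound, starting from a doubly 0-dimensional $T$ whose periodic set has dimension $0$; this rests on a Baire-category construction over $C(X,\R)$ controlled by the TSP of Definition 2.1, which is packaged in \cite[Theorem 3.1]{Kat20} and is not to be redone here. At the level of Theorem 4.2 the only fresh work is the routine translation from the finite window $\{0,1,\dots,2d\}$ to the one-sided shift $(\R^{\N},\sigma)$, and Proposition 4.1 is precisely what performs that translation.
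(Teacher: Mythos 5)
Your proposal matches the paper's own (one-line) proof exactly: the paper derives Theorem 4.2 by combining \cite[Theorem 3.1]{Kat20} (which supplies the dense $G_{\delta}$-set $D$, the finite-window trajectory-embedding, and the coincidence bound (c)) with Proposition 4.1 applied at $k=2d$ to obtain (a) and (b). Your write-up simply makes explicit the composition of parts (1)--(3) of Proposition 4.1 that the paper leaves implicit, so it is correct and follows the same route.
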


\section{Reconstruction theorem in the one-sided shift $(\R^{\N},\sigma)$}

Let $X, Y$ be compact metric spaces and let $\varphi:X\to 2^Y\cup \{\emptyset \}$ be a set-valued function, where $2^Y$ denotes the set of all nonempty closed subsets
of $Y$. Then  $\varphi:X\to 2^Y\cup \{\emptyset \}$ is {\it upper semi-continuous} if 
for any $x\in X$ and any open neighborhood $V$ of $\varphi(x)$ in $Y$, there is an open neighborhood $U$ of $x$ in $X$ such that $\varphi(x') \subset V$ for any $x'\in U$. 

Let $(X,T)$ be any one-sided dynamical system.  A point $x\in X$ is a {\it chain recurrent point} of $T$ if for any $\epsilon >0$ 
there is a finite sequence $x=x_0,x_1,\cdots,x_m=x~(m\geq 1)$ of points of $X$ such that 
$d(T(x_i),x_{i+1}) < \epsilon$  for
each $i=0,1,\cdots,m-1$.  Let $CR(T)$ be the set of all chain recurrent points of $T$. Note
that $P(T) \subset CR(T)$, $CR(T)$ is a nonempty closed subset of $X$ and the 
set-valued function
$$CR : C(X,X) \to 2^X, ~~T  \mapsto  CR(T)$$
is upper semi-continuous (see \cite{BF85}). 

We will define the following class 0-$\mathcal{DCR}$ of compact metric spaces. 
\begin{defn}
Let 0-$\mathcal{DCR}$ be the class  of all compact metric spaces $X$ satisfying  the following two  conditions: \\ 
$(0$-$\mathcal{D})$ The set of  doubly 0-dimensional maps $T:X \to X$ is dense in $C(X,X)$. \\
$(0$-$\mathcal{CR})$ The set of maps $T: X \to X$ with  $\dim CR(T)=0$ is dense in $C(X,X)$. 
\end{defn} 

Remark 2. Note that for a compact metric space $X$, both the set  of  0-dimensional maps $T:X\to X$  and the set of maps $T:X \to X$ with  $\dim CR(T)=0$ are $G_{\delta}$-sets of $C(X,X)$ (e.g. see \cite{KOU16}). So note that if $X$ belongs to 0-$\mathcal{DCR}$,  then the set  of  all maps $T:X\to X$ such that $T$ is a 0-dimensional map with $\dim CR(T)=0$ is a dense $G_{\delta}$-set of $C(X,X)$. \\

Let $A$ be a (nonempty) closed subset of a compact metric space $X$. 
Here we need the following notion:    
$D(A) < \eta$  if $A$ can be decomposed into finitely many mutually disjoint closed sets $A_i$ with diam$(A_i) < \eta$ for each $i$, i.e. $A=\bigcup_i A_i$,  diam$(A_i) < \eta$, and $A_i\cap A_j=\emptyset$ for $i \neq j$. Note that $\dim A=0$ if and only if 
$D(A)< \eta$ for each $\eta >0$.

Modyfying the proof of \cite[Lemma 3.11]{KM20}, we have the following. 

\begin{lem} {\rm (c.f. \cite[Lemma 3.11]{KM20})}  Let $\eta >0$ and $k\in \N$. 
Suppose that $T: X \to X$ is a doubly 0-dimensional map of a
compact metric space $X$
such that $\dim X = d < \infty$ and
$D(\mathrm{cl}[\cup_{p=0}^{4k} T^{-p}(P(T))]) < \eta$. Then  $T$ has $(k,\eta)$-TSP. 
\end{lem}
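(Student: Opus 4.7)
The plan is to adapt the template of \cite[Lemma 3.11]{KM20} to the present one-sided $(k,\eta)$-TSP setting, building the required closed set $H$ as a union of two pieces: a "tube" part $L$ handling the periodic structure, and a "generic separator" $H_0$ on the remaining safe region. Using the hypothesis $D(K)<\eta$ with $K:=\mathrm{cl}[\bigcup_{p=0}^{4k}T^{-p}(P(T))]$, I would write $K=K_1\sqcup\cdots\sqcup K_m$ as a disjoint union of closed pieces of diameter $<\eta$, shrink to pairwise disjoint open neighborhoods $K_i\subset W_i$ with $\mathrm{diam}(W_i)<\eta$, and set $W:=\bigcup_iW_i$, $L:=\bigcup_i\partial W_i$, $Y:=X\setminus W$. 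Then $Y$ is compact and disjoint from $\bigcup_{p=0}^{4k}T^{-p}(P(T))$.

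On $Y$, I would use that $\dim X=d$ together with the doubly $0$-dimensional hypothesis to construct, by a standard refinement in dimension theory, a closed set $H_0\subset X$ such that $Y\setminus H_0$ is a finite disjoint union of open subsets of $X$ of diameter $<\eta$, and such that the family $\{T^{-p}(H_0\cup L)\}_{p=0}^{k}$ has order at most $d$ at every point of $X$. The key input of doubly $0$-dimensionality is that iterated $T$-preimages of thin $0$-dimensional subsets of $H_0$ remain $0$-dimensional, so the boundaries of the refinement of $Y$ can be perturbed into general position to ensure that no $d+1$ of the sets $T^{-p}(H_0\cup L)$ share a common point. Setting $H:=H_0\cup L$ yields condition (1) of $(k,\eta)$-TSP immediately, since $X\setminus H$ is a finite disjoint union of the $W_i$ and the open pieces of $Y\setminus H_0$, all of diameter $<\eta$.

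The main obstacle is verifying condition (2), the order bound. When a forward orbit segment $x,T(x),\ldots,T^{k}(x)$ stays entirely in $Y$, the general-position construction of $H_0$ directly gives $\mathrm{ord}_x\{T^{-p}(H)\}_{p=0}^{k}\leq d$. The delicate case is when the orbit enters some tube $W_i$, where it could in principle cross $\partial W_i\subset L$ repeatedly. Here the unusually deep hypothesis (controlling $T^{-p}(P(T))$ up to $p=4k$ rather than only $p=k$) is essential: it forces any such orbit to be trapped inside a single piece $K_i\subset W_i$ for enough consecutive iterates that no extra boundary crossings occur in the $(k+1)$-length window, so the hits on $L$ together with those on $H_0$ remain within the $d$-fold bound. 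Making this trapping rigorous, and simultaneously checking that the perturbation producing $H_0$ does not destroy the tube decomposition of $K$, will be the technical heart of the proof.
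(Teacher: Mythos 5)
Your overall architecture (a small cover, general position of boundaries via double $0$-dimensionality, with the hypothesis on $\mathrm{cl}[\bigcup_{p=0}^{4k}T^{-p}(P(T))]$ controlling where boundaries may sit) has the right shape, but the way you propose to close the argument contains a genuine gap. The ``trapping'' claim is false: the tubes $W_i$ are merely neighborhoods of the pieces of $\mathrm{cl}[\bigcup_{p=0}^{4k}T^{-p}(P(T))]$, which is not a forward-invariant set, so nothing prevents an orbit segment $x,T(x),\dots,T^k(x)$ from entering and leaving several different $W_i$ and hitting $L=\bigcup_i\partial W_i$ many times; the hypothesis up to $4k$ provides no trapping at all. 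Consequently condition $(2)$ for the family $\{T^{-p}(H_0\cup L)\}_{p=0}^{k}$ is not established: you perturb $H_0$ into general position but leave $L$ fixed, and the order of $\{T^{-p}(L)\}_{p=0}^{k}$ alone can already exceed $d$.

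The actual role of the hypothesis, and the route the paper takes, is different. Since $D(\mathrm{cl}[\bigcup_{p=0}^{4k}T^{-p}(P(T))])<\eta$, one chooses a \emph{single} finite open cover $\{C_i\}$ of all of $X$ by sets of diameter $<\eta$ whose boundaries are disjoint from $\mathrm{cl}[\bigcup_{p=0}^{4k}T^{-p}(P(T))]$ (each piece of the decomposition is engulfed by a cover element). A point $z$ in a small neighborhood of $\bigcup_i\mathrm{bd}(C_i)$ then satisfies $T^{t}(z)\cap T^{t'}(z)=\emptyset$ for distinct $t,t'$ with $-2k\leq t,t'\leq 2k$, i.e.\ there is no near-periodicity along the relevant orbit window; it is exactly this injectivity, combined with double $0$-dimensionality, that allows \emph{all} the boundaries to be perturbed simultaneously (as in the cited Lemma 3.11 of \cite{KM20}) so that $\mathrm{ord}\{T^{-p}(\mathrm{bd}(C'_i))\mid 1\leq i\leq M,\ 0\leq p\leq k\}\leq d$. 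The set $H$ is then the union of the boundaries of a shrinking of this one cover; there is no separate tube part and no trapping step. To repair your argument you would need to include $L$ in the general-position family as well (which is feasible, since $\partial W_i$ also avoids the bad set), rather than argue that orbits avoid it.
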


\begin{proof} 
Since $D(\mathrm{cl}[\cup_{p=0}^{4k} T^{-p}(P(T))]) < \eta$,  there is
an open cover $\mathcal{C} = \{C_i \mid 
1\leq i \leq M \}$ of $X$ such that \\
$(a)$~ $\mathrm{diam} (C_i) < \eta$ for each  $1\leq i \leq M$, and \\
$(b)$~$\mathrm{bd}(C_i)\cap (\mathrm{cl}[\cup_{p=0}^{4k} T^{-p}(P(T)))])=\emptyset$ for each $1\leq i \leq M$. 

Put $K=\bigcup_{i=1}^M\mathrm{bd}(C_i)$. Then by (b) there is an open neighborhood $K'$ of $K$ in $X$ such that for any point $z \in K'$, $T^t(z)\cap T^{t'}(z)=\emptyset$ for $-2k\leq t \leq 2k$.  \\
By modyfying the proof of \cite[Lemma 3.11]{KM20}, we see that 
there is
an open cover  $\mathcal{C}'= \{C'_i \mid 
1\leq i \leq M \}$ of $X$ such that \\ 
$(1)$ $ C'_i \subset C_i$ for each $1\leq i \leq M$, and \\
$(2)$ $\mathrm{ord}\{ f^{-p} (\mathrm{bd}(C'_i)) \mid 1\leq i \leq M, p=0,1,...,k \} \leq d$, and \\
 $(3)$
$\mathrm{bd}(C'_i) \cap (\mathrm{cl}[\cup_{p=0}^{4k} T^{-p}(P(T))]) = \emptyset$ for each $1\leq i \leq M$. \\

 Put $c'_1 = \mathrm{cl}(C'_1),$
$c'_i =
\mathrm{cl}(\mathrm{int} [(C'_i) \setminus (\bigcup_{j <i}C'_{j})])$ 
for $2 \leq i \leq M$. We define 
$$H=\bigcup_{i=1}^M\mathrm{bd}(c'_i)~\mbox{and}~
U_i=\mathrm{int}(c'_i)~(i=1,2,..,M).$$
Then $H$ satisfies the desired conditions of $(k,\eta)$-TSP. 
\end{proof}

\begin{lem}{\rm (A version of Borsuk's homotopy extension theorem, c.f. 
\cite[(8.1)Theorem]{Bor67} and \cite[Theorem 4.1.3]{Mil01}) }
Let $X$ be a compact metric space and $M$ a closed subset of
$X$, and let maps $f', g': M \to \R^k$  satisfy $d(f',g')<\epsilon$. If $g:X\to \R^k$ is an extension of $g'$, then $f'$ has an extension $f:X \to \R^k$ such that $d(f,g)<\epsilon$. 
\end{lem}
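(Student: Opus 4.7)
The plan is to reduce this to the classical Tietze extension theorem by working with the difference $f' - g|_M$ and then pushing it back into an $\varepsilon$-ball around $g$ via a radial retraction.

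First I would set $h' := f' - g|_M : M \to \R^k$. Since $M$ is compact and the two maps are continuous, the hypothesis $d(f',g') < \epsilon$ gives $\delta := \max_{m \in M} \|h'(m)\| < \epsilon$, and I would fix any $\delta'$ with $\delta < \delta' < \epsilon$. Applying the Tietze extension theorem coordinate-wise to the bounded continuous map $h' : M \to \R^k$ yields a continuous extension $\tilde h : X \to \R^k$.

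Next I would compose $\tilde h$ with the radial retraction $r : \R^k \to \overline{B}(0,\delta')$ defined by $r(v) = v$ when $\|v\| \le \delta'$ and $r(v) = \delta' v/\|v\|$ otherwise. The map $r$ is continuous and fixes $\overline{B}(0,\delta')$ pointwise, so since $\|h'(m)\| \le \delta < \delta'$ for every $m \in M$, the composition $h := r \circ \tilde h : X \to \R^k$ still extends $h'$, and by construction $\|h(x)\| \le \delta' < \epsilon$ for every $x \in X$.

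Finally, I would define $f := g + h : X \to \R^k$. Then $f|_M = g|_M + h' = g|_M + (f' - g|_M) = f'$, so $f$ extends $f'$, and $d(f,g) = \sup_{x \in X} \|f(x) - g(x)\| = \sup_{x \in X} \|h(x)\| \le \delta' < \epsilon$, as required. There is no real obstacle here; the only subtlety is ensuring the strict inequality in the conclusion, which is why one must interpose $\delta'$ strictly between $\delta$ and $\epsilon$ before truncating, rather than retracting onto the open ball of radius $\epsilon$ (where no continuous radial retraction exists).
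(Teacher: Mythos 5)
Your proof is correct. The paper itself gives no proof of this lemma --- it is stated with citations to Borsuk's \emph{Theory of Retracts} and van Mill's book and used as a black box --- so there is no argument of the author's to compare against. Your route (extend the difference $h'=f'-g'$ by Tietze, truncate with the radial retraction onto $\overline{B}(0,\delta')$ for some $\delta'$ strictly between $\sup_M\|h'\|=d(f',g')$ and $\epsilon$, then add $g$ back) is the natural elementary proof of this special case: since the target is $\R^k$ rather than a general ANR, the full strength of Borsuk's homotopy extension theorem is not needed, and your argument is self-contained and shorter than invoking the cited results. All the steps check out: Tietze applies because $M$ is closed in the metric (hence normal) space $X$; the radial retraction is continuous and fixes the closed ball of radius $\delta'$, so $r\circ\tilde h$ still restricts to $h'$ on $M$; and interposing $\delta'$ is exactly the right device to preserve the strict inequality $d(f,g)<\epsilon$ in the conclusion. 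One cosmetic remark: you do not actually need compactness of $M$ to get $\delta<\epsilon$, since $\sup_{m\in M}\|h'(m)\|$ equals $d(f',g')$ by definition of the sup metric, whether or not the supremum is attained.
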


Let $X$ be any compact metric space. For each $\alpha >0$ and $S\subset \N$ a set of cardinarity $2d+1$, let   $E(\alpha;S)$ be the subset of
$C(X,X)\times C(X,\R)$ consisting of all pairs $(T,f)$ such that  $I_{T,f}^S: X \to  \R^{S}$ 
is an $\alpha$ {\it trajectory-embedding} (i.e., $I^S_{T,f}(x)\neq I^S_{T,f}(y)$  whenever $x,y \in X$ with $d(T^j(x),T^j(y))\geq \alpha$ for all $j\in S$). 

The main theorem of this paper is the following.

\begin{mainthm} {\rm (Reconstruction theorem of dynamical systems)} Let $X$ be a 
 compact metric space with $\dim X=d$. Suppose that $X$ belongs to the class 0-$\mathcal{DCR}$.Then the following assertions  $(1)-(3)$  hold.\vspace{2mm}\\
$(1)$ {\rm ($\alpha$ trajectory-embedding)}  Let  $\alpha >0$ and $S\subset \N$ a set of cardinarity $2d+1$. Then the set $E(\alpha;S)$ is a dense open set of $C(X,X)\times C(X,\R)$. \vspace{1mm}\\
$(2)$ {\rm (Trajectory-embedding)} There exists a $G_{\delta}$-dense set $E$ of $C(X,X)\times  C(X,\R)$ such that if $(T,f) \in E$, for any $S\subset \N$ of cardinality $2d+1$ 
 $$I_{T,f}^S: X \to  \R^{S}$$ is a trajectory-embedding.  \vspace{1mm} \\
$(3)$  {\rm (Infinite delay observation)} If $E$ is the set as in the above (2), then for any $(T,f) \in E$, 
$$I_{T,f}=T_{T,f}^{\N}:(X,T) \to (\R^{\N},\sigma)$$  satisfies the following conditions: \\
\hspace*{3mm} $(a)$ $I_{T,f}:[O(T)] \cong [O(\sigma_{T,f})]$, \\
\hspace*{3mm} $(b)$ $h(T)=h(\sigma_{T,f})$ and \\
\hspace*{3mm} $(c)$ if $x, y\in X$ are trajectory-separated for $T$, then 
$$|\{i\in \N|~I_{T,f}(x)_i=I_{T,f}(y)_i\}|\leq 2d.$$

In particular, $X$ is a reconstruction space of dynamical systems. 
\end{mainthm}

\begin{proof}
 We prove (1). Let $\alpha >0$ and $S \subset \N$ of cardinality $2d+1$.  
 For each $T\in C(X,X)$,  we put
$$L(T:\alpha,S) =$$ $$\{(x,y)\in X\times X |~d(T^{j}(x),T^{j}(y))\geq \alpha~\mbox{for}~j \in S\} \subset X\times X.$$
Recall the set  
$$E(\alpha;S)=$$ $$\{(T,f)\in C(X,X)\times C(X,\R)|I_{T,f}^S(x)\neq I_{T,f}^S(y)~\mbox{for}~(x,y)\in L(T:\alpha,S) \}. $$  

We will show that $E(\alpha,S)$ is an open subset of $C(X,X)\times C(X,\R)$. Let 
$(T,f)\in E(\alpha,S)$. Since $L(T:\alpha,S)$ is compact, we can choose a neighborhood $K$ of $L(T:\alpha,S)$ and  $\epsilon >0$ such that for any $(x,y)\in K$,
$$d(I_{T,f}^S(x),I_{T,f}^S(y))\geq 2\epsilon.$$ 
Note that the $$L(\alpha,S): C(X,X)\to 2^{X\times X}\cup \{\emptyset\}, T \mapsto 
L(T:\alpha,S)$$  is an upper semi-continuous set-valued function. We can choose a neighborhood $U(T)$ of $T$ in $C(X,X)$ and a neighborhood
$V(f)$ of $f\in C(X,\R)$ such that if $(T',f') \in U(T) \times V(f)$, then 
$L(T':\alpha,S) \subset K$ and for $(x,y) \in K$, 
$$d(I_{T',f'}^S(x), I_{T',f'}^S(y))\geq \epsilon.$$
Since $L(T':\alpha,S) \subset K$,  we see that
$I_{T',f'}^S(x)\neq I_{T',f'}^S(y)$  
for $(x,y) \in L(T':\alpha,S)$. Then $(T',f') \in E(\alpha,S)$ and so 
$U(T)\times V(f) \subset E(\alpha,S)$. Hence $E(\alpha,S)$ is an open set
of $C(X,X)\times C(X,\R)$. 

Next, we will show that $E(\alpha,S)$ is dense in $C(X,X)\times C(X,\R)$. 
 Let $(T,f) \in C(X,X)\times C(X,\R)$ and $\epsilon >0$. 
 Since $f:X\to \R$ is uniformly continuous, there is a sufficiently small positive number 
 $\eta >0$ such that $\eta < \alpha$ and if $x,y\in X$ with $d(x,y)<\eta$, 
  then $d(f(x),f(y))<\epsilon$. Let $k=\max S$.  
 By Remark 2, we can choose $T_1 \in C(X,X)$ such that  such that $T_1$ is  a 
 0-dimensional map, $d(T,T_1)<\epsilon/2$ and $\dim CR(T_1)=0$.  Since  $\dim (\cup_{p=0}^{4k} T_1^{-p}(CR(T_1)))=0$, we choose a closed neighborhood $W$ of 
  $\cup_{p=0}^{4k} T_1^{-p}(CR(T_1))$ in $X$ with $D(W)<\eta$. Since the set function $CR: C(X,X) \to 2^X$ is upper semi-continuous and $X$ satisfies the condition 0-$\mathcal{D}$ of Definition 5.1, 
  we can choose a dubly 0-dimensional map $T_2\in C(X,X)$ such that $d(T_1,T_2)< \epsilon/2$ and $\cup_{p=0}^{4k} T_2^{-p}(CR(T_2))\subset W$. Then   $D(\cup_{p=0}^{4k} T_2^{-p}(CR(T_2))) < \eta$ and so $D(\mathrm{cl}[\cup_{p=0}^{4k} T_2^{-p}(P(T_2))]) < \eta$.  By Lemma 5.2, we see that $T_2$ has $(k,\eta)$-TSP.   Hence 
there is a closed set $H$ of $X$
such that \\
$(1)$ $X\setminus  H$ is a union of finitely many disjoint open sets
 of  diameter at most $\eta$,  and \\
$(2)$ $\mathrm{ord}\{T_2^{-p}(H)\}_{p=0}^k \leq d$.   

We choose a small open neighborhood $G$ of $H$ in $M$  such that \\
 $ (2')$ $\mathrm{ord}\{T_2^{-j}(G)\}_{j=0}^k \leq d.$\\
Then we may assume that $X\setminus \mathrm{cl}(G)$ is a union of  disjoint open sets
$V_i (i = 1,2,...,m)$ such that $\mathrm{cl}(V_i) \subset U_i$.  
 Note that
$ \mathrm{cl}(V_i) \cap \mathrm{cl}(V_j)=\emptyset (i \neq j)$. For each $i$, take a point $t_i$ which belongs to a sufficiently
small neighborhood of $f(\mathrm{cl}(V_i))$ in $\R$ such that $t_i\neq t_j$ if $i\neq j$. We define a map
$$g': \bigcup_{i=1}^m \mathrm{cl}(V_i) \to \R$$
by $g'(\mathrm{cl}(V_i))=t_i$. Then by Lemma 5.3, we have an extension
$g: X \to \R$ of $g'$ with $d(g,f)<\epsilon$.
We will prove $(T_2,g) \in E(\alpha,S)$. Let $(x,y) \in L(T_2:\alpha,S)$. By (d'),
$$|\{j \in S|~T_2^{j}(x) \in G\}|\leq d$$ and 
$$|\{j \in S|~T_2^{j}(y) \in G\}|\leq d.$$ 
Since $|S|=2d+1$, we can find some $j \in S$  such that $T_2^{j}(x), T_2^{j}(y) \in X\setminus G$. 
Since $d(T_2^{j}(x),T_2^{j}(y)) \geq \alpha$ and $\mathrm{diam}(\mathrm{cl}(V_i)) <\eta <\alpha$ for each $i=1,2,..,m$, 
there are $n, n'$ such that $n\neq n'$ and $T_2^{j}(x) \in \mathrm{cl}(V_n)$ and 
$T_2^{j}(y) \in \mathrm{cl}(V_{n'})$.
Then $gT_2^{j}(x)=t_n \neq t_{n'}=gT_2^{j}(y)$.  This implies $I_{T_2,g}^S(x)\neq I_{T_2,g}^S(y) $ 
 and hence $$(T_2,g)\in E(\alpha,S).$$
Note that $d(T,T_2) <\epsilon$ and $d(f,g) < \epsilon$. So we see that $E(\alpha,S)$ is a dense open set of
$C(X,X)\times C(X,\R)$.

We will prove $(2)$. Let $J$ be the set of all set $S \subset \N$ of cardinality $(2d+1)$. 
 Note that $J$ is a countable set. We define 
$$E=\bigcap\{E(1/n,S)|~S\in J ~\mbox{and} ~n\in \N\setminus \{0\} \}.$$
Then we see that $E$ is a desired dense $G_{\delta}$-set in $C(X,X)\times C(X,\R)$.

Finally we will prove (3). Let $(T,f)\in E$. Note that if $k=2d$, then $(T,f)$ satisfies the conditions of Proposition 4.1.  Hence $$I_{T,f}:[O(T)] \cong [O(\sigma_{T,f})] ~\mbox{and}~h(T)=h(\sigma_{T,f}).$$ Let  $x, y\in X$ be trajectory-separated points for $T$. Suppose, on the contrary, that $$|\{i\in \N|~I_{T,f}(x)_i=I_{T,f}(y)_i\}|> 2d.$$ Then we can choose a set $S' \subset \{i\in \N|~I_{T,f}(x)_i=I_{T,f}(y)_i\}$ with $|S'|=2d+1$. This is a contradiction to the fact that  $I_{T,f}^{S'}$ is  a trajectory-embedding.

This completes the proof.

\end{proof}

\section{The class 0-$\mathcal{DCR}$} 
In this section, we consider  the following general problem. 

\begin{prob}
What kinds of  compact metric spaces belong to the class 0-$\mathcal{DCR}$ ?
\end{prob}

We will show that PL-manifolds, some branched manifolds and some fractal sets, e.g. Menger manifolds, Sierpi\'{n}ski carpet, Sierpi\'nski  gasket and dendrites, belong to the class 0-$\mathcal{DCR}$.  

In \cite{KOU16} Krupski, Omiljanowski and
Ungeheuer defined the class 0-$\mathcal{CR}$ which is the family of all compact metric spaces $X$ such that the set $CR(T)$ is 0-dimensional for a generic map $T\in C(X,X)$. 
They proved the following result. 
 
\begin{thm}{\rm (\cite[Theorem 5.1]{KOU16})} If $X$ is a (compact) polyhedron, then  $X\in 0$-$\mathcal{CR}$. Moreover,  if $X$ is a compact metric space that admits an $\epsilon$-retraction $r_{\epsilon}:X\to P$ onto a polyhedron $P\subset X$ for each $\epsilon >0$ (i.e., $d(r_{\epsilon}, id_X)<\epsilon$ and $r_{\epsilon}|P=id_P$), 
then $X\in 0$-$\mathcal{CR}$. 
\end{thm}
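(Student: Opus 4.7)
The plan is to show density of $\{T \in C(X,X) : \dim CR(T) = 0\}$ in $C(X,X)$, treating the two cases in turn. The unifying observation is that $CR(T) \subseteq \bigcap_{n \geq 0} T^n(X)$: given $x \in CR(T)$, follow an $\epsilon$-pseudo-orbit of length $\geq n$ returning to $x$ and trace back through the uniform continuity modulus of $T$ to see that $x$ lies within $O(\epsilon)$ of $T^n(X)$; letting $\epsilon \to 0$ (using compactness of $T^n(X)$) gives $x \in T^n(X)$ for every $n$. Hence it suffices to arrange $\bigcap_n T^n(X)$ to be $0$-dimensional.

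For the polyhedron case, I would proceed by induction on $d = \dim X$. Given $T_0$ and $\epsilon > 0$, triangulate $X$ with mesh less than $\epsilon/3$ and, after a suitable subdivision, replace $T_0$ by a simplicial map $T_1$ with $d(T_0, T_1) < \epsilon/2$, noting that $T_1$ preserves the skeletal filtration $X^{(0)} \subseteq \dots \subseteq X^{(d)} = X$. By the inductive hypothesis applied to the lower-dimensional polyhedron $X^{(d-1)}$, perturb $T_1|_{X^{(d-1)}}$ to a nearby simplicial map whose chain recurrent set is $0$-dimensional, and extend back over the top cells. Finally, compose with a small radial collapse $\phi : X \to X$ that is the identity on $X^{(d-1)}$ and a strict contraction by factor $\lambda < 1$ toward each top-simplex barycenter, with $\lambda$ small enough that $\lambda$ times the simplicial Lipschitz constant of $T_1$ is less than $1$ and $d(\phi, \mathrm{id}) < \epsilon/2$. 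Setting $T := \phi \circ T_1$, the top-dimensional material of $T^n(X)$ shrinks geometrically, so $\bigcap_n T^n(X) \subseteq \bigcap_n T^n(X^{(d-1)})$ up to a set of vanishing diameter, and the inductive step delivers the $0$-dimensional conclusion.

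For the retract case, let $r_\eta : X \to P$ be the given $\eta$-retractions onto the polyhedron $P$. Given $T_0$ and $\epsilon > 0$, choose $\eta$ small enough that $\eta + \omega_{T_0}(\eta) < \epsilon/2$, where $\omega_{T_0}$ is the modulus of continuity of $T_0$. Applying the polyhedron case on $P$ to the auxiliary map $\tilde T_0 := r_\eta \circ T_0|_P : P \to P$ yields $\tilde T \in C(P,P)$ with $d(\tilde T, \tilde T_0) < \epsilon/2$ and $\dim CR(\tilde T) = 0$. Define $T := \tilde T \circ r_\eta : X \to P \hookrightarrow X$; a direct estimate gives $d(T, T_0) < \epsilon$. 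Since $T(X) \subseteq P$, the inclusion $CR(T) \subseteq \overline{T(X)}$ forces $CR(T) \subseteq P$, and since $r_\eta|_P = \mathrm{id}_P$ one has $T|_P = \tilde T$. Projecting any pseudo-orbit for $T$ in $X$ through $r_\eta$ yields a pseudo-orbit for $\tilde T$ in $P$ with controlled loss $\leq 2\eta$, so $CR(T) = CR(\tilde T)$, whence $\dim CR(T) = 0$.

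The main obstacle is the inductive step in the polyhedron case: perturbing $T_1|_{X^{(d-1)}}$ and then extending back over the top cells, while simultaneously ensuring the radial collapse $\phi$ is compatible and the total error stays under $\epsilon$. One has to arrange the extension so that the extended map still preserves the filtration (so the inductive hypothesis continues to apply at lower levels), and so that the contraction factor $\lambda$ can be chosen after the skeletal perturbation once the new simplicial Lipschitz constants are known. Managing this bookkeeping---so that the inductive invariant ``$\bigcap_n T^n(X^{(k)})$ is $0$-dimensional'' passes cleanly from level $k-1$ to level $k$---is the technical heart of the argument.
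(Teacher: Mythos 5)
This theorem is not proved in the paper at all: it is quoted verbatim from \cite[Theorem 5.1]{KOU16}, so there is no in-paper argument to compare against. Judged on its own terms, your proposal has a genuine gap in its central reduction. You correctly observe that $CR(T)\subseteq\bigcap_{n}T^{n}(X)$, but the plan to force $\bigcap_{n}T^{n}(X)$ to be $0$-dimensional by a small perturbation is impossible in general. Take $X=S^{1}$ (a compact polyhedron): every map $C^{0}$-close to the identity has degree $1$ and is therefore surjective, so $\bigcap_{n}T^{n}(X)=S^{1}$ for every $T$ in a whole open neighborhood of the identity. The same degree obstruction occurs on any closed manifold. Thus the set of maps whose eventual image is $0$-dimensional is not dense, and your sufficient condition simply cannot be arranged; the inclusion $CR(T)\subseteq\bigcap_{n}T^{n}(X)$ is typically very far from equality (a circle map with finitely many hyperbolic fixed points has finite $CR(T)$ but full eventual image). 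Relatedly, the ``radial collapse'' $\phi$ cannot be simultaneously continuous, equal to the identity on $X^{(d-1)}$, and non-surjective on a top simplex: interpolating the contraction factor from $1$ on the boundary to $\lambda$ at the barycenter makes $\phi$ map each top simplex onto itself, so the claimed geometric shrinking of ``top-dimensional material'' does not occur. Any correct proof must control $CR(T)$ directly (e.g., by producing maps whose chain components form a finite or $0$-dimensional family, exploiting attractor--repeller structure and the upper semicontinuity of $T\mapsto CR(T)$), which is what \cite{KOU16} does.

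The retract half of your argument is much closer to being right, and factoring through $r_{\eta}$ is indeed the natural route, but as written it also has a quantitative flaw: pushing an $\epsilon$-pseudo-orbit of $T$ forward by $r_{\eta}$ produces a pseudo-orbit for $\tilde T$ with error of order $\epsilon+\eta$, and $\eta$ is fixed before $\epsilon\to 0$, so you only get that points of $CR(T)$ are $\eta'$-chain recurrent for $\tilde T$ for $\eta'>\eta$, not that $CR(T)\subseteq CR(\tilde T)$. This is repairable: since $T=\tilde T\circ r_{\eta}$ sends $X$ into $P$ and $T|_{P}=\tilde T$, replace the pseudo-orbit $x_{0},\dots,x_{m}$ by $y_{0}=x_{0}$, $y_{i}=T(x_{i-1})\in P$ for $i\geq 1$; then $d(\tilde T(y_{i}),y_{i+1})\leq\omega_{T}(\epsilon)$ and the total error tends to $0$ with $\epsilon$, giving $CR(T)\subseteq CR(\tilde T)$ and hence $\dim CR(T)=0$. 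So the second half can be salvaged, but the first half needs a different strategy altogether.
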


Now, we will consider  the family  0-$\mathcal{D}$ of all compact metric spaces $X$ such that all doubly 0-dimensional maps on $X$ is dense in $C(X,X)$. 
A map $T:X \to X$ is said to be  a {\it piecewise embedding} if there is a countable family $\{F_i\}_{i\in \N}$ of closed subsets of $X$ such that $X=\bigcup_{i\in \N} F_i$ and $T|F_i:F_i\to X$ is injective for each $i\in \N$. Note that if  a map $T:X \to X$ is a piecewise embedding, then $T$ is doubly 0-dimensional because that  $\dim T^{-1}(x)$ is a countable set for each $x \in X$ and  $$\dim T(A)=\max\{\dim T(A\cap F_i)~|~i\in \N\}\leq 0$$ for any 0-dimensional closed set $A$ of $X$ (see the countable sum theorem for dimension  \cite[Theorem 3.1.8]{Eng95}).

A (compact) $d$-dimensional  polyhedron $P~(d\geq 1)$ is  called a {\it manifold with branch structures} if $P=\bigcup_{j\in J}M_j\cup M$, where \\ 
$(1)$ $\{M_j\}_{j\in J}$ is a finite family of  mutually disjoint closed sets  of $P$ such that   for each $j\in J$, 
$$M_j=N_j \cup_{\varphi_{\alpha}}\bigcup \{N_{j,\alpha}|\alpha \in J_j\},$$ where  
 $J_j$ is a finite set, $N_j, N_{j,\alpha}~(\alpha \in J_j)$ are $d$-dimensional manifolds with boundaries, and $M_j$ is obtained from $N_j$ by attaching $N_{j,\alpha}~(\alpha \in J_j)$ via locally embedding maps $\varphi_{\alpha}: N'_{j,\alpha}  \to \partial N_j$ from a $(d-1)$-dimensional (compact) submanifold  $N'_{j,\alpha}$ of  $\partial N_{j,\alpha}$ into $\partial N_j$, i.e., $M_j$ is the quotient space of the topological sum $N_j \amalg_{\alpha\in J_j} N_{j,\alpha}$ under the identifications $ x \sim \varphi_{\alpha}(x)$ for $x\in N'_{j,\alpha}\subset \partial N_{j,\alpha}$ and  the quotient map is denoted by $q_j: N_j \amalg_{\alpha\in J_j} N_{j,\alpha} \to M_j~(=N_j \cup \bigcup\{q_j(N_{j,\alpha})|~\alpha \in J_j\})$, \\
$(2)$ $M$ is a $d$-dimensional compact manifold in $P$  with $$M\cap \bigcup\{\varphi_{\alpha}(N'_{j,\alpha})~|~ j\in J, \alpha \in J_j\}=\emptyset$$ and \\
$(3)$  $P\setminus  \bigcup\{\varphi_{\alpha}(N'_{j,\alpha})~|~ j\in J, \alpha \in J_j\}$ is a $d$-dimensional (non-compact) manifold.\\

Remark. All PL-manifolds and some branched manifolds are  manifolds with branch structures. The associated template of the well-know Lorenz attractor is a manifold with branch structures \cite{GL02}. \\

Let $K$ be a simplicial complex and let $K^{(m)}$ be the $m$-skeleton of $K$, i.e., the set of all simplexes of $K$ whose dimension are $\leq m$. For a vertex $v$ of $K^{(0)}$, let $\mathrm{St}(v,K)$ be the closed star of $v$, i.e., $\mathrm{St}(v,K)=\bigcup\{\sigma \in K|~v \in \sigma \}$. Also 
let $\beta K$ denote the barycentric subdivision of $K$. Let $\Delta=<p_0,p_1,\cdots,p_n>$ and $\sigma=<v_0,v_1,...,v_n>$ be $n$-simplexes and let $J$ be the set of all sequence $\{\star\}=s_0, s_1, \cdots, s_n=\sigma$ of faces of $\sigma$ such that $s_{i-1}$ is a face of $s_i$ and $\dim s_{i-1}+1=\dim s_i$  for $i=1,2,...,n$. Then $|J|=(n+1) !$ and 
$$\sigma=\bigcup\{<b(s_0),b(s_1),\cdots,b(s_n)>|~(s_0,s_1,\cdots ,s_n) \in J \},$$ where $b(s_i)$ is  the barycenter of $s_i$. Consider the {\it folding map (at barycenters)} 
 $f_{\sigma}:|\beta\sigma| \to \Delta$ which is the simplicial map defined by $f_{\sigma}(b(s_i))=p_i$ for each $i=0,1,2,...,n$. Note that  $f_{\sigma}$ is a piecewise embedding. 

\begin{prop}
Let $P$ be a manifold with branch structures. Then the set of all piecewise embedding maps $T: P\to P$ is dense in  $C(P,P)$. In particular, $P$ belongs to 
 0-$\mathcal{DCR}$. Hence $P$ is a reconstruction space of dynamical systems. 
\end{prop}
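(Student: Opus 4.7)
The plan is to verify that $P$ belongs to the class 0-$\mathcal{DCR}$; the Main Theorem 5.4 then immediately yields that $P$ is a reconstruction space of dynamical systems. The condition 0-$\mathcal{CR}$ comes essentially for free from Theorem 6.2, since every manifold with branch structures is a compact polyhedron. So the real work is to verify 0-$\mathcal{D}$, and I plan to prove the strictly stronger statement explicitly claimed in the proposition: that the set of piecewise embeddings $T:P\to P$ is dense in $C(P,P)$. As already observed in the text, the countable sum theorem \cite[Theorem 3.1.8]{Eng95} implies that every piecewise embedding is doubly 0-dimensional, so this density directly upgrades to 0-$\mathcal{D}$.

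To produce a piecewise embedding $\epsilon$-close to a given $T\in C(P,P)$, I would proceed as follows. First, fix a PL triangulation $K$ of $P$ compatible with the branch decomposition -- meaning that each $M$, $M_j$, $N_j$, $N_{j,\alpha}$, and each $\varphi_\alpha(N'_{j,\alpha})$ is realized as a subcomplex -- which exists by general PL theory. After a sufficient subdivision we may assume $\mathrm{mesh}(K)<\epsilon/3$ and that $T$ carries each closed simplex of $K$ to a set of diameter $<\epsilon/3$. Next, by the simplicial approximation theorem there exist $n\geq 0$ and a simplicial map $T':|\beta^n K|\to|K|$ with $d(T,T')<2\epsilon/3$. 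On each top-dimensional simplex $\sigma$ of $\beta^n K$, the restriction $T'|\sigma$ is either affinely injective (good) or else collapses $\sigma$ onto a lower-dimensional simplex of $K$ (bad). For each bad $\sigma$, I would further subdivide barycentrically and replace $T'|\sigma$ using the folding map $f_\sigma:|\beta\sigma|\to\Delta$ introduced just before the proposition: composed with a PL map into a nearby $d$-simplex of $K$, this yields a map whose defining property -- bijectivity on each top-dimensional simplex of $\beta\sigma$ -- guarantees piecewise injectivity. The resulting $T''$ is then injective on the finite family of closed top-dimensional simplexes of the refined complex, i.e., is a piecewise embedding, and diameter control keeps $d(T',T'')<\epsilon/3$, so $d(T,T'')<\epsilon$.

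The main obstacle I anticipate is precisely the replacement step: (a) ensuring that near every point of $P$, including on the branching locus, there is enough $d$-dimensional room inside $K$ to accommodate a folding, and (b) making the local modifications glue continuously along shared faces -- most delicately along $\bigcup_{j,\alpha}\varphi_\alpha(N'_{j,\alpha})$, where several $d$-dimensional sheets meet. Both points are controlled by the manifold-with-branch-structures hypothesis: condition (3) of that definition forces the complement of the branching locus to be a (non-compact) $d$-manifold, so $d$-simplexes of $K$ are available near every point; and the fact that the attaching maps $\varphi_\alpha$ are local embeddings of $(d-1)$-submanifolds allows the folding choices to propagate consistently across branches, provided $K$ is taken to be compatible with the $\varphi_\alpha$. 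Once these technicalities are dispatched, the three conclusions of the proposition -- density of piecewise embeddings, membership in 0-$\mathcal{DCR}$, and being a reconstruction space -- follow in sequence from the remarks above together with the Main Theorem.
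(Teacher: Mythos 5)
Your overall strategy --- simplicial approximation followed by a folding-map repair of the degenerate simplexes, with the branch structure supplying the $d$-dimensional room --- is the same as the paper's. But the step you describe as ``replace $T'|\sigma$ using the folding map composed with a PL map into a nearby $d$-simplex of $K$'' is exactly where the proof lives, and as stated it does not go through. Two adjacent degenerate simplexes $\sigma,\sigma'$ of the source complex sharing a face $\rho$ may be collapsed by $T'$ onto different (nested) target simplexes of $K$, and a per-source-simplex replacement gives no reason why the two modified maps agree on $\rho$; moreover, when the target simplex lies on the branching locus $\bigcup_{j,\alpha}\varphi_\alpha(N'_{j,\alpha})$ there are several $d$-dimensional sheets ($N_j$ and the various $q_j(N_{j,\alpha})$) into which a ``nearby $d$-simplex'' could be chosen, and different source simplexes with overlapping targets must make compatible choices. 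You correctly flag both difficulties, but the sentence claiming that the local-embedding property of the $\varphi_\alpha$ ``allows the folding choices to propagate consistently'' asserts the resolution rather than providing it.

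The paper's device for enforcing global consistency is worth spelling out, since your plan needs an equivalent of it. It does not select a nearby $d$-simplex of $K$ at all; instead it forms an abstract complex $\tilde K\supset K$ by formally attaching, to each $k$-simplex $\langle v_0,\dots,v_k\rangle$ of $K$ with $k\le d-1$, abstract $d$-simplexes $\langle v_0,\dots,v_k;(a_0,\dots,a_k)\rangle$. The join-of-foldings construction then assembles into a single globally defined piecewise embedding $\varphi:|L|\to|\tilde K|$, well-defined on shared faces because the join decomposition of each source simplex is determined by $T_1$ restricted to that simplex. All geometric choices are concentrated in a retraction $r:|\tilde K|\to|K|$ that is injective on each attached abstract $d$-simplex, and this retraction is built by induction on the skeleta $A_0,A_1,\dots,A_{d-1}$, with the inductive compatibility condition forcing the embedding chosen for a $k$-simplex's attached cell to extend those already chosen over its faces; the case analysis (cells over $N_j$, over $N_j\cup q_j(N_{j,\alpha})$, or in the complement of the branching locus) is what uses the branch structure. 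The desired piecewise embedding is the composite $r\varphi$. Until your repair step is organized per target simplex and by induction on its dimension in this way (or some equivalent), the gluing gap remains.
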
 

\begin{proof}  Let $\dim P=d\geq 1$. Since $P$ is a polyhedron, by Theorem 6.2, $P$ belongs to  the class 
0-$\mathcal{CR}$. We will show that $P$ belongs to the class 0-$\mathcal{D}$. 
Let $T\in C(P,P)$ and $\epsilon>0$. We choose a simplicial complex $K$ of $P$ such that   $\mathrm{mesh}(K)$  is sufficiently small, i.e., $\mathrm{mesh}(K)<\epsilon/2$. Take a simplicial approximation $T_1: P=|L|\to |K|$ of $T$ such that 
$d(T,T_1)<\epsilon/2$, where $L$ is a subdivision of $K$. 

By modifying $T_1$, we will construct a map $T'_1:|\beta L|\to P$ such that 
for each $d$-simplex $s$ of $\beta L$, $T'_1|s:s\to P$ is an embedding and $d(T_1,T_1')<\epsilon$. We consider the following abstract simplicial complex $\tilde{K}$ which contains the  simplicial complex $K$ as follows: For each $0\leq k\leq d-1$, let $$J_k=\{(a_0,a_1,..,a_k)\in \N^{k+1}~|~d=k+\Sigma_{i=0}^k a_i\}.$$ For each $k$-simplex $\sigma=<v_0,v_1,...,v_k> ~(k\leq d-1)$ of $K$ and  each $(a_0,a_1,..,a_k)\in J_k$, we consider the abstract $d$-simplex 
$$<v_0,v_1,...,v_k;(a_0,a_1,..,a_k)>$$  $$=<p_{(v_0,0)},p_{(v_0,1)}
,..,p_{(v_0,a_0)},p_{(v_1,0)},p_{(v_1,1)},..,p_{(v_1,a_1)},\cdots,p_{(v_k,0)},p_{(v_k,1)}..,p_{(v_k,a_k)}>$$ 
where we assume $v_{i}=p_{(v_i,0)}\in K^{(0)}~(i=0,1,...,k)$. In particular, 
 $$<v;d>=<p_{(v,0)},p_{(v,1)},...,p_{(v,d)}>$$
 for each vertex $v \in K^{(0)}$, where $v=p_{(v,0)}$.  We define the abstract simplicial complex $\tilde{K}$ as follows: 
$$\tilde{K}=K\bigcup \{s~| s ~\mbox{is a face of }<v_0,v_1,...,v_k;(a_0,a_1,..,a_k)>, 0\leq k\leq d-1, $$ $$ (a_0,a_1,..,a_k)\in J_k, 
\mbox{and} <v_0,v_1,...,v_k> \in K^{(k)}\setminus K^{(k-1)}\}.$$ 
For each $0\leq k \leq d-1$, we put  
$$A_k=\bigcup\{<v_0,v_1,...,v_k;(a_0,a_1,..,a_k)>| <v_0,v_1,...,v_k>\in K^{(k)}\setminus K^{(k-1)}, $$ $$ (a_0,a_1,..,a_k)\in J_k\}.$$

We will construct a retraction $r:|\tilde{K}|\to |K|$ such that $$r|<v_0,v_1,...,v_k;(a_0,a_1,..,a_k)>$$ is injective. Recall that $P=|K|$ is a manifold with branch structures.
So we assume that $$P= \bigcup_{j\in J}M_j \cup M, M_j=N_j \cup_{\varphi_{\alpha}}\bigcup \{N_{j,\alpha}|\alpha \in J_j\},$$
$N'_{j,\alpha}, \varphi_{\alpha}$ and $q_j$ are defined as above.

By induction on $k~(0\leq k\leq d-1)$,  we will construct $h_k:\cup_{i=0}^kA_i \to |K|$.
First we will construct a map $h_0:A_0\to |K|$ as follows.    
Let $v\in K^{(0)}$. 

 If $v \notin  \bigcup\{\varphi_{\alpha}(N'_{j,\alpha})~|~ j\in J, \alpha \in J_j\},$   
we choose an embedding $$h_0:<v;d>(=<p_{(v,0)},p_{(v,1)},...,p_{(v,d)}>)\to P\setminus  \bigcup\{\varphi_{\alpha}(N'_{j,\alpha})~|~ j\in J, \alpha \in J_j\}$$ 
 with $h_0(v)=v$, because that $P\setminus  \bigcup\{\varphi_{\alpha}(N'_{j,\alpha})~|~ j\in J, \alpha \in J_j\}$ is a $d$-dimensional (non-compact) manifold. 
 
If $v\in \varphi_{\alpha}(N'_{j,\alpha})$ for some $j\in J, \alpha \in J_j$, we choose an embedding $$h_0:<v;d> \to N_j$$  with $h_0(v)=v$, because that $N_j$ is a $d$-dimensional manifold.
So we have a map $h_0:A_0\to |K|$. 

Now we assume that 
$h_{k-1}:\cup_{i=0}^{k-1}A_{i}\to |K|$ have been constructed. 
Let $<v_0,v_1,...,v_k>$ be a $k$-simplex of $K$. 

If  $<v_0,v_1,...,v_k>$ is contained in $P\setminus  \bigcup\{\varphi_{\alpha}(N'_{j,\alpha})~|~ j\in J, \alpha \in J_j\}$, then we can choose an embedding 
  $$h_k:<v_0,v_1,...,v_k;(a_0,a_1,..,a_k)> \to P\setminus  \bigcup\{\varphi_{\alpha}(N'_{j,\alpha})~|~ j\in J, \alpha \in J_j\}$$  satisfying the following conditions $(a)$ and $(b)$:\\
 $(a)$ $h_k|<v_0,v_1,...,v_k>=id$ and \\
 $(b)$  
$$h_k|A_{k-1}\cap <v_0,v_1,...,v_k;(a_0,a_1,..,a_k)>= $$ $$h_{k-1}|A_{k-1}\cap <v_0,v_1,...,v_k;(a_0,a_1,..,a_k)>.$$ 

If $<v_0,v_1,...,v_k>$ is contained in $N_j$ for some $j\in J$ and 
$$<v_0,v_1,...,v_k>\cap \bigcup\{\varphi_{\alpha}(N'_{j,\alpha})~|~ j\in J, \alpha \in J_j\}\neq \emptyset,$$
 then we choose  an embedding  $h_k:<v_0,v_1,...,v_k;(a_0,a_1,..,a_k)> \to N_j$  satisfying  $(a)$ and $(b)$ as above. 

 If  $<v_0,v_1,...,v_k>$ is contained in $q_j(N_{j,\alpha})$ and  $<v_0,v_1,...,v_k>$  intersects with  $\partial N_j$,  then we choose an embedding $$h_k:<v_0,v_1,...,v_k;(a_0,a_1,..,a_k)> \to  N_j \cup q_j(N_{j,\alpha})$$ satisfying  $(a)$ and $(b)$ as above,  
 because that as the assumption of the case $k-1$, we can assume that $$h_{k-1}( A_{k-1}\cap \partial<v_0,v_1,...,v_k;(a_0,a_1,..,a_k)>)$$ is contained in a $d$-dimensional manifold in $N_j \cup q_j(N_{j,\alpha})$.

By induction on $k$, we obtain $h_{d-1}$. And by use of $h_{d-1}$ we have a retraction $r:|\tilde{K}|\to |K|$ such that $r|<v_0,v_1,...,v_k;(a_0,a_1,..,a_k)>$ is injective. 

Next, we will define a PL-map $\varphi: |L| \to |\tilde{K}|$ which is a piecewise embedding.
For  each $d$-simplex $\sigma$ of $L$, we consider the simplex 
$$T_1(\sigma)=<v_0,v_1,...,v_k> \in K~(k\leq d).$$ 
For each vertex $v_i$ of $T_1(\sigma)$, we consider the face 
$$T^{-1}(v_i)\cap \sigma=<w_{(i,0)},w_{(i,1)},..,w_{(i,a_i)}>=\sigma_{v_i}$$ of $\sigma$. Note that 
$d=k+\Sigma_{i=0}^k a_i$ and 
$$\sigma=<w_{(v_0,0)},..,w_{(v_0,a_0)},w_{(v_1,0)},..,w_{(v_1,a_1)},\cdots,w_{(v_k,0)},..,w_{(v_k,a_k)}>$$
$$\equiv \sigma_{v_0}*\sigma_{v_1}*\cdots *\sigma_{v_k}.$$
We put $$ \beta\sigma_{v_0}*\beta\sigma_{v_1}*\cdots *\beta\sigma_{v_k}=\{\tau_0*\tau_1\cdots *\tau_k|~\tau_i\in \beta\sigma_i, \dim \tau_i=a_i\}.$$ 
Then $ \beta\sigma_{v_0}*\beta\sigma_{v_1}*\cdots *\beta\sigma_{v_k}$ gives a subdivision of $\sigma$. 
 Consider the (abstract) $d$-simplex $$\Delta_{\sigma}=<v_0,v_1,...,v_k;(a_0,a_1,..,a_k)>$$ $$=<p_{(v_0,0)},..,p_{(v_0,a_0)},p_{(v_1,0)},..,p_{(v_1,a_1)},\cdots,p_{(v_k,0)},..,p_{(v_k,a_k)}>$$ 
of $\tilde{K}$  
 and consider the folding map 
$$f_{\sigma_{v_i}}: |\beta\sigma_{v_i}| \to \Delta_{v_i}=<p_{(v_i,0)},..,p_{(v_i,a_i)}>~(\in \tilde{K})$$ defined as above.

For each  $d$-simplex $\sigma$ of $L$, we have a map  
$$\varphi_{\sigma}=f_{\sigma_{v_0}}* f_{\sigma_{v_1}} \cdots * f_{\sigma_{v_k}}: $$ $$\sigma= |\beta\sigma_{v_0}*\beta\sigma_{v_1}*\cdots *\beta\sigma_{v_k}| \to \Delta_{v_0}*\Delta_{v_1}*\cdots *\Delta_{v_k}=\Delta_{\sigma} \in \tilde{K}.$$ 
Note that if $\dim T_1(\sigma)=d$, $\varphi_{\sigma}=T_1|\sigma$.

By use of 
$\varphi_{\sigma}$, we have a desired PL map $\varphi: |L| \to |\tilde{K}|$ which is a piecewise embedding. 
Finally, we put $T_1'=r\varphi: P\to P$. Then $T_1'$ is a piecewise embedding. Also by the constraction of $r$, we may assume that $d(T_1,T_1')<\epsilon/2$. This means that $P$ satisfies the condition (0-$\mathcal{D})$. 
 This completes the proof. 
\end{proof}

Many  dynamical properties of Cantor sets have been studied by many authors. Now we consider dynamical properties of higher dimensional fractal sets. 

 For $0\leq k < n$, we will construct  a space $L^n_k$  in the $n$-simplex $M_0=<v_0,v_1,...,v_n>$ by Lefshetz's method (see \cite[p.129]{Chi96} and \cite{Lef31}). 
We define a sequence $\{(M_i,L_i)\}_{i\in \N}$ of compact $n$-dimensional polyhedra $M_i$ with triangulations $L_i$ inductively as follows. 
Let $M_0$ be the $n$-simplex $<v_0,v_1,...,v_n>$ with the standard simplicial complex structure $L_0$. 
Suppose $(M_i,L_i)$ has been defined. Let 
$$M_{i+1}=\bigcup\{{\mathrm St}(v,\beta^2(L_i))~|~v \mbox{ is a vertex of }~ \beta(L_i^{(k)}) \}$$ and $$L_{i+1}=\beta^2L_{i}|M_{i+1}.$$  Note that $M_{i+1}$ may be regarded as a regular neighborhood of the $k$-skeleton of $L_i$. 
Then $\{M_i\}_{i\in \N}$ is a decreasing sequence and we obtained a compact metric space $$L^n_k=\bigcap_{i\in \N}M_i.$$
 Note that $L^1_0$ is a Cantor set and $L^{2d+1}_d~(=\mu^d)$ is called  the $d$-dimensional {\it Menger compactum}. Also $L^2_1$ is called the {\it Sierpi\'{n}ski carpet}. 
A space $X$ is a $d$-dimensional {\it Menger manifold} if $X$ is compact and each point $x$ of $X$ has a neighborhood $W$ of $x$ in $X$ such that $W$ is homeomorphic to the $d$-dimensional Menger  compactum $\mu^d$ (for many geometric properties of $\mu^d$, see \cite{Bes88}).

Also the  Sierpi\'{n}ski gasket can be constructed from an equilateral triangle by repeated removal of (open) triangular subsets: Start with an equilateral triangle. Subdivide it into four smaller congruent equilateral triangles and remove the central (open) triangle.
Repeat this step with each of the remaining smaller triangles infinitely. So we have a sequence $\{X_i\}_{i\in \N}$ of continua in the plane and the intersection $X=\bigcap_{i\in \N}X_i$ is called the {\it Sierpi\'{n}ski gasket}. 

A compact connected metric space (=continuum) $X$ is said to be a {\it dendrite} if $X$ is a 1-dimensional locally connected continuum which contains no simple closed curve.

\begin{prop} Let $M$ be a $d$-dimensional Menger manifold. Then  $M$ belongs to  0-$\mathcal{DCR}$ and hence $M$ is a reconstruction space. More precisely, there exists a $G_{\delta}$-dense set $E'$ of $C(M,M)\times  C(M,\R)$ such that if $(T,f) \in E'$, then for any $S\subset \N$ of cardinality $2d+1$, $I_{T,f}^S: M \to  \R^{S}$ is an embedding and so  
 $$I_{T,f}=T_{T,f}^{\N}:(M,T) \to (\R^{\N},\sigma)$$  is an embedding. 
\end{prop}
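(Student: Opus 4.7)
The plan is to reduce the statement to the Main Theorem 5.4 once I have verified that $M$ belongs to the class $0$-$\mathcal{DCR}$, and then to upgrade the generic trajectory-embedding conclusion to an actual embedding conclusion by intersecting with the dense $G_{\delta}$-set of injective self-maps of $M$. For the $0$-$\mathcal{D}$ condition I would invoke Bestvina's approximation theorem for $\mu^{d}$-manifolds, which (via the disjoint $d$-cells property) asserts that the set $\mathrm{Emb}(M,M)$ of topological embeddings is dense in $C(M,M)$; since any embedding has singleton fibres and is a homeomorphism onto its (closed) image, every embedding is in particular doubly $0$-dimensional, so density of doubly $0$-dimensional maps follows. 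For the $0$-$\mathcal{CR}$ condition I would apply Theorem 6.2 directly, after exhibiting $\epsilon$-retractions $r_{\epsilon}\colon M\to P_{\epsilon}$ onto polyhedral subsets $P_{\epsilon}\subset M$; such retractions exist by the regular-neighbourhood structure underlying the defining nested sequence $M_{0}\supset M_{1}\supset\cdots$ with $\mu^{d}=\bigcap_{i}M_{i}$, lifted through the Menger-manifold chart structure.

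Granted these two class memberships, I would then apply Theorem 5.4 to produce a dense $G_{\delta}$-set $E\subset C(M,M)\times C(M,\R)$ on which the trajectory-embedding conclusions hold for every $S\subset\N$ of cardinality $2d+1$, together with conditions $(a)$--$(c)$ of the Main Theorem. To strengthen trajectory-embeddings to genuine embeddings, I would set
\[
E'=E\cap\bigl(\mathrm{Emb}(M,M)\times C(M,\R)\bigr).
\]
The set $\mathrm{Emb}(M,M)$ is $G_{\delta}$ in $C(M,M)$, being the countable intersection over $n\geq 1$ of the open sets $\{T:\, d(x,y)\geq 1/n\Rightarrow T(x)\neq T(y)\}$, and it is dense in $C(M,M)$ by the input cited in Step 1; hence $E'$ is a dense $G_{\delta}$ in $C(M,M)\times C(M,\R)$. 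Whenever $(T,f)\in E'$ the map $T$ is injective, so Remark~1\,(2) immediately promotes each trajectory-embedding $I_{T,f}^{S}$ to an honest topological embedding for every $S\subset\N$ with $|S|=2d+1$. Specializing $S=\{0,1,\ldots,2d\}$ and invoking Proposition~4.1\,(2), the projection $p_{(0,1,\ldots,2d)}\colon I_{T,f}(M)\to I_{T,f}^{(0,1,\ldots,2d)}(M)$ is a homeomorphism, and composing with an embedding $I_{T,f}^{(0,1,\ldots,2d)}$ forces $I_{T,f}\colon M\to\R^{\N}$ itself to be an embedding, giving the last assertion.

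The main obstacle I anticipate is the verification of the two class-membership conditions for $\mu^{d}$-manifolds, both of which rest on non-trivial structural theorems about Menger manifolds: Bestvina's characterisation theory to obtain density of embeddings, and the existence of arbitrarily fine polyhedral $\epsilon$-retractions to put Theorem~6.2 into play. Once these inputs are accepted, the rest is a routine assembly of Theorem~5.4, Proposition~4.1, and Remark~1\,(2); no new dynamical or dimension-theoretic estimates are required beyond what is already collected in the paper.
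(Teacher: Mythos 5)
Your proposal is correct and follows essentially the same route as the paper: Bestvina's results give the polyhedral $\epsilon$-retractions (hence $0$-$\mathcal{CR}$ via Theorem 6.2) and the density of embeddings in $C(M,M)$ (hence $0$-$\mathcal{D}$, since embeddings are doubly $0$-dimensional), after which Theorem 5.4 applies. The only cosmetic difference is that the paper obtains $E'$ by re-running the density argument of Theorem 5.4 with $C(M,M)$ replaced by the space $e(M,M)$ of embeddings, whereas you intersect $E$ with the dense $G_{\delta}$-set $\mathrm{Emb}(M,M)\times C(M,\R)$ and then invoke Remark 1(2); the two are equivalent by Baire's theorem.
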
 

\begin{proof} By  \cite[Definition 1.2.1 and Corollary 5.2.2]{Bes88},  for each $\epsilon >0$, $M$ admits an $\epsilon$-retraction $r_{\epsilon}:M\to P$ onto a $d$-dimensional polyhedron $P\subset M$. Hence by Theorem 6.2, $M$ belongs to 0-$\mathcal{CR}$. Also it is well-known that the set $e(M,M)$ of all embeddings $T: M\to M$ is  a $G_{\delta}$ dense set of $C(M,M)$ (see  \cite[Theorem 2.3.8]{Bes88}). Hence 
$M$ belongs to  0-$\mathcal{DCR}$. Recall the proof of Theorem 5.4. We can complete the latter part of the proof by replacing $C(M,M)$  with $e(M,M)$. 
\end{proof}

We will show that the  Sierpi\'{n}ski carpet belongs to  0-$\mathcal{DCR}$. In \cite[p.323]{Why58}, Whyburn proved that the  Sierpi\'{n}ski carpet is homeomorphic to any $S$-curve $X$ (=plane locally connected 1-dimensional continuum whose complement  in the plane consists of countably many components with frontiers being mutually disjoint  simple closed curves $\{S_i\}_{i\in \N}$, and moreover, if $K_1, K_2$ are $S$-curves and $C_1, C_2$ are frontiers of  components of complements of $K_1, K_2$ in the plane $\R^2$, respectively, then each homeomorphism of $C_1$ onto $C_2$ can be extended to a homeomorphism of $K_1$ onto $K_2$. 
Such simple closed curves $\{S_i\}_{i\in \N}$ are called the {\it  rational circles} of  the $S$-curve $X$. The union of all these circles $\{S_i\}_{i\in \N}$ is called the {\it rational part} of  $X$, and the remainder $X\setminus (\bigcup_{i\geq 0} S_i)$ is called the  {\it irrational part} of $X$. We need the following lemma. 

\begin{lem}
Let $X$ be an $S$-curve in the plane $\R^2$ and let  $\{S_i\}_{i\in \N}$ be  rational circles of  $X$, and $S_0$ the frontier of the unbounded component of $\R^2 \setminus X$. Let $B_k~(k\geq 1)$ be the disk in $\R^2$ with $\partial B_k=S_k$. If $p:\R^2 \to H$ is 
 the decomposition map of $\R^2$ obtained by identifying the sets $B_1, B_2,...$ to single points respectively, then the decomposition space $H$ is homeomorphic to $\R^2$, $p(X)=D$ is a disk in the plane $H$ with $\partial D=p(S_0)$, and 
 the set $\{p(S_i)~|i=1,2,..\}$ is a countable set in  $D\setminus \partial D$. Moreover, for a point $x$ of $X\setminus S_0$, $x$ is in the  irrational part of $X$ if and only if $p^{-1}(p(x))=\{x\}$. 
\end{lem}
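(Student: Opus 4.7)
The plan is to invoke Moore's decomposition theorem. Working in $\R^2$ (or, after one-point compactification, in $S^2$), I regard the identifications as a monotone decomposition whose only non-degenerate elements are the disks $B_k$ for $k\geq 1$, each of which is a non-separating sub-continuum of the plane. The key technical point is to verify that this decomposition is upper semicontinuous, which in the present setting reduces to the null-sequence condition $\mathrm{diam}(B_k)\to 0$. This in turn follows from the local connectedness of $X$: if infinitely many $B_k$ had diameter bounded below, one could extract a subsequence converging (in the Hausdorff metric) to a non-degenerate continuum $K\subset X$, and at any point of $K$ the space $X$ would fail to admit arbitrarily small connected neighborhoods, contradicting local connectedness. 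Moore's theorem then yields that $H$ is homeomorphic to $\R^2$.

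For item (2), let $B_0$ denote the closed disk bounded by $S_0$, so that $B_0 = X \cup \bigcup_{k\geq 1}\mathrm{int}(B_k)$. Since the rational circles $S_i$ are pairwise disjoint by definition, $S_0 \cap B_k=\emptyset$ for every $k\geq 1$, hence $p|_{S_0}$ is injective, and $p(S_0)$ is a simple closed curve in $H\cong \R^2$. By the Jordan curve theorem, $p(S_0)$ bounds a topological disk in $H$; I would then identify this disk with $p(B_0)=p(X)$ by noting that every non-trivial fiber of $p$ lies in $\mathrm{int}(B_0)$, so $p$ restricted to $B_0$ is a quotient map onto the closed disk bounded by $p(S_0)$, establishing $p(X)=D$ with $\partial D = p(S_0)$.

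Item (3) is then immediate: for each $k\geq 1$ the circle $S_k$ is contained in $\mathrm{int}(B_0)$, and $p(S_k)=p(B_k)$ is a single point of $p(\mathrm{int}(B_0))=D\setminus \partial D$; the family $\{p(S_i):i\geq 1\}$ is countable by construction. For item (4), the non-trivial fibers of $p$ are precisely the disks $B_k$ with $k\geq 1$, so for $x\in X\setminus S_0$ one has $p^{-1}(p(x))\neq \{x\}$ if and only if $x\in B_k$ for some $k\geq 1$; since $X\cap \mathrm{int}(B_k)=\emptyset$, this is equivalent to $x\in S_k$ for some $k\geq 1$, i.e., $x$ lies in the rational part of $X$. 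Equivalently, $p^{-1}(p(x))=\{x\}$ exactly when $x$ lies in the irrational part.

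The main obstacle I expect is the rigorous verification of upper semicontinuity of the decomposition, i.e., the null-sequence property $\mathrm{diam}(B_k)\to 0$; once this is in hand, Moore's theorem together with the Jordan curve theorem and an elementary analysis of the fibers of $p$ handles the remaining assertions.
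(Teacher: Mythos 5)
Your proposal is correct and follows essentially the same route as the paper, which likewise disposes of the lemma by a direct appeal to Moore's decomposition theorem. You supply considerably more detail than the paper does (the null-sequence verification via local connectedness, the Jordan-curve and fiber analysis), but the underlying argument is the same.
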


\begin{proof} By the Moore's theorem \cite[p.380]{Kur61}, we see that 
$H$ is homeomorphic to $\R^2$ and $p(X)=D$ is a disk. Note that the set $\{p(S_j)| j\geq 1\}$ is a countable set in the disk $D$.
\end{proof}

\begin{prop} Let $X=L^2_1\subset \R^2$ be the  Sierpi\'{n}ski carpet. Then  $X$ belongs to  0-$\mathcal{DCR}$. 
\end{prop}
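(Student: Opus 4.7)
The plan is to verify the two conditions defining $0$-$\mathcal{DCR}$ separately: $(0$-$\mathcal{CR})$ will be easy via Theorem 6.2, while $(0$-$\mathcal{D})$ will require a construction exploiting the self-similarity of $X$ and the extension theorem for $S$-curves invoked in the paragraph preceding Lemma 6.5.

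For $(0$-$\mathcal{CR})$, I would produce explicit polyhedral $\epsilon$-retractions of $X$. At stage $n$ of the carpet construction, let $P_n\subset X$ denote the union of all horizontal and vertical grid-line segments of side-length $3^{-n}$ that lie in $X$; this is a finite planar $1$-dimensional polyhedron. For each stage-$n$ "active" square $Q$, its geometric center $c_Q$ lies in the middle ninth that is removed at stage $n+1$, so $c_Q\notin X$. Radial projection from $c_Q$ defines a continuous map $F_Q=X\cap Q\to \partial Q$ that is the identity on $\partial Q$. Gluing these projections across the finitely many $Q$ produces a retraction $r_n\colon X\to P_n$ with $d(r_n,\mathrm{id}_X)\leq \sqrt{2}\cdot 3^{-n}$. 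By Theorem 6.2, $X\in 0$-$\mathcal{CR}$.

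For $(0$-$\mathcal{D})$, given $T\in C(X,X)$ and $\eps>0$, I would construct a piecewise embedding $T'$ with $d(T,T')<\eps$ by modifying $T$ cell-by-cell on the stage-$n$ partition. Choose $n$ so large (using uniform continuity of $T$) that $T(F_Q)$ has diameter $<\eps/4$ for every stage-$n$ active square $Q$. Each $F_Q$ is homeomorphic to the full Sierpi\'nski carpet and hence to an $S$-curve, and its topological boundary $\partial F_Q$ (four arcs forming a square) is a rational circle of $F_Q$. The idea is: for each $Q$, select a small rational circle $S_{k(Q)}$ of $X$ whose bounded sub-carpet $F_{k(Q)}^{*}\subset X$ is contained in a tiny neighborhood of $T(F_Q)$, and build a homeomorphism $T'|F_Q\colon F_Q\to F_{k(Q)}^{*}$ by invoking Whyburn's extension theorem (the property of $S$-curves recalled just before Lemma 6.5): any homeomorphism between the outer rational circles of two $S$-curves extends to a homeomorphism of the whole $S$-curves. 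The construction proceeds inductively over the stage-$n$ cell complex (vertices, then edges, then $2$-cells), in the same spirit as the proof of Proposition 6.3: first place the stage-$n$ grid vertices, then embed each stage-$n$ grid arc into an appropriate arc of some chosen $S_{k(Q)}$, and finally use Whyburn extension to fill in each $F_Q$. Because each $F_Q$ is injectively mapped and the collection $\{F_Q\}$ is finite, the resulting $T'$ is a piecewise embedding, hence doubly $0$-dimensional.

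The main obstacle is compatibility on the overlaps $F_Q\cap F_{Q'}$ of adjacent stage-$n$ cells. Two adjacent cells share a grid arc, and the circles $S_{k(Q)}, S_{k(Q')}$ chosen as targets must be selected so that their planar positions allow the shared arc to be embedded into a common arc of both circles. Because the rational circles of $X$ occur at every scale in every region (they are dense in $X$), one can first choose, for each stage-$n$ edge $e$, a target arc in some small rational circle of $X$ lying near $T(e)$, and only afterward choose $S_{k(Q)}$ to be a rational circle bounding a sub-carpet that contains the chosen target arcs of the four edges of $\partial Q$. This coherence step --- arranging target circles so that the inductive extensions match --- is the technical heart of the argument; once it is in place, the Whyburn extension does the remaining work and the resulting $T'$ is close to $T$ in the sup metric as required.
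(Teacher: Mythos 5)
Your argument for condition $(0$-$\mathcal{CR})$ is correct and essentially the paper's: the radial projections from the centers of the removed ninths give the same $\epsilon$-retractions onto $1$-dimensional polyhedra that the paper obtains as regular neighborhoods of the $1$-skeleta $L_i^{(1)}$, and Theorem 6.2 then applies.

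For condition $(0$-$\mathcal{D})$ there is a genuine gap, and it sits exactly where you yourself locate ``the technical heart.'' First, your target sets are not the right objects: a rational circle $S_k$ of the carpet is the frontier of a \emph{removed} complementary disk, so the only points of $X$ in the closed disk it bounds are the points of $S_k$ itself --- there is no ``bounded sub-carpet $F^*_{k(Q)}$'' attached to $S_k$. If you instead take $F^*$ to be $X\cap Q''$ for a deep active grid square $Q''$ (whose boundary is the outer circle of that $S$-curve but is not a rational circle of $X$), then insisting that each $T'|F_Q$ be a homeomorphism onto such a sub-square forces the family of target squares to reproduce exactly the adjacency pattern of the stage-$n$ cells: adjacent cells must go to squares sharing a common edge, the edge-arcs must cover each target boundary exactly (otherwise the assembled boundary map is not a homeomorphism of circles and Whyburn's extension theorem does not apply), and a grid vertex where several cells meet must be sent to a common corner of all the corresponding target squares. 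Since $T$ is arbitrary, you would have to realize this rigid combinatorial configuration by grid squares placed near prescribed, $T$-dependent locations; you do not construct it, and nothing in the sketch shows it exists. The paper's proof resolves precisely this coherence problem by a different device: Moore's decomposition theorem (Lemma 6.5) collapses the complementary disks $B_k$ to points, turning $X$ into a topological disk $D$ containing the countable marked set $Z=\{p(S_k)\}$. One then chooses the images $\tilde w$ of the grid vertices in the \emph{irrational part} of $X$, in general position in the quotient plane $H$, with the segments $[p(\tilde w),p(\tilde w')]$ missing $Z$; the Euclidean triangles $\tilde\sigma$ spanned by these points automatically share edges exactly when the domain simplices do, their preimages $p^{-1}(\tilde\sigma)\cap X$ are $S$-curves, and the boundary homeomorphisms $h_\sigma$ can be prescribed edge by edge so that adjacent Whyburn extensions agree. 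That flexibility --- targets cut out by arbitrary triangles in the quotient disk rather than grid-aligned sub-carpets --- is exactly what your proposal is missing.
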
 

\begin{proof} Let $\epsilon >0$.  Recall the Lefshetz's construction of 
$L^2_1$ as above.  We see that $M_{i+1}$ is regarded as a regular neighborhood of the $1$-skeleton of $L_i$. So we can easily see that  $X$ admits an $\epsilon$-retraction $r_{\epsilon}:X\to |L_i^{(1)}|$ for a sufficiently large $i\in \N$.  Hence $X$ belongs to 0-$\mathcal{CR}$. 

We will show that $X$ belongs to the class 0-$\mathcal{D}$. Let $T\in C(X,X)$ and  $\epsilon >0$.   Let $M_0=\Delta_2$ be a 2-simplex with the standard simplicial complex structure $L_0$.  
We have the sequence $(M_i,L_i)$ defined as above, i.e.,     
$$M_{i+1}=\bigcup\{{\mathrm St}(v,\beta^2(L_i))~
|~v \mbox{ is a vertex of }~ \beta(L_i^{(1)}) \}$$ and $$L_{i+1}=\beta^2L_{i}|M_{i+1}.$$ 
Then 
$L^2_1=\bigcap_{i\in \N}M_i.$
Note that 
${\mathrm St}(v,\beta^2(L_i))$ is a disk in $\R^2$ and  
 ${\mathrm St}(v,\beta^2(L_i))\cap X$ is an $S$-curve. Choose  a sufficiently large natural number $i_0$ so that 
 $\mathrm{diam ~St}(v,\beta^2(L_{i_0}))<\epsilon$ for each vertex $v$ of 
 $\beta(L_{i_{0}}^{(1)})$. Put $D_v=\mathrm{St}(v,\beta^2(L_{i_0}))$. 

Let $\{S_k\}_{k\in \N}$ be the family of rational circles of the $S$-curve $X$ such that  
$S_k \subset \mathrm{int}_{\R^2}M_{i_0+1}$ and  $B_k$ is the disk with $\partial B_k=S_k$  for each $k\in \N$.   Let $p:\R^2 \to H$ be  
 the decomposition map of $\R^2$ obtained by identifying the sets $B_1, B_2,...$ to single points respectively. Then  $p(D_v)$ is a disk in the plane $H$. So   we have a family $$\{p(D_v)~|~v \mbox{ is a vertex of }~ \beta(L_{i_0}^{(1)}) \}$$ of disks in $H$  such that  $$p(M_{i_0+1})=\bigcup \{p(D_v)~|~v \mbox{ is a vertex of }~ \beta(L_{i_0}^{(1)}) \}$$  and 
$\mathrm{ord}(\{p(D_v)~|~v \mbox{ is a vertex of }~ \beta(L_{i_0}^{(1)}) \})\leq 2$. Since the set $Z=\{p(S_k)~|k\in \N\}$ is a countable set in $p(M_{i_0+1})$,  we have a disk $E_v$ in $ p(M_{i_0+1})$ such that  \\
$(1)$ $\partial E_v \cap Z=\emptyset$, \\
$(2)$ $p(D_v)\subset \mathrm{int}_{p(M_{i_0+1})}E_v$, \\
$(3)$ $\mathrm{diam}~p^{-1}(E_v)<\epsilon$ and \\
$(4)$ $\mathrm{ord}\{E_v|~v \mbox{ is a vertex of }~ \beta(L_{i_0}^{(1)}) \} \leq 2$. 

If necessary, by use of homeomorphism of $\R^2$ we may assume that $H=\R^2$ and each $E_v$ is a convex set in $H$.  Put $D'_v=p^{-1}(E_v)\cap X$. 
Choose a large natural number  $j_0\geq i_0$ such that for each 2-simplex $\sigma$ of $L_{j_0}$, there is a vertex $v$ of  $\beta(L_{i_0}^{(1)})$ such that  $T(\sigma \cap X) \subset  \mathrm{int}_{X}D'_v$.  For  each $w \in  L_{j_0}^{(0)}$, 
we put  $$V(w)=\{v |~v \mbox{ is a vertex of }~ \beta(L_{i_0}^{(1)}) 
~\mbox{and}~ T(w)\in D'_v\}.$$  
Note that $1\leq |V(w)|\leq 2$. Since $Z$ is a countable set in $H=\R^2$ and by use of usual general position arguments in the plane, we see that for any  $w \in  L_j^{(0)}$, take a point ${\tilde w}$ of the irrational part of $\bigcap\{D'_v|~v \in V(w)\}$ such that  
\\
$(5)$ if $w, w' \in  L_j^{(0)}$ and $w\neq w'$, then ${\tilde w}\neq {\tilde w'}$, \\
$(6)$ the set $\{p({\tilde w}) ~|w\in  L_j^{(0)}\}$ is in general position of the plane $H=\R^2$
and the segment $[p({\tilde w}), p({\tilde w'})]$ in $\R^2$ contains no point of $Z$.\\

Let $\sigma$ be any 2-simplex in $L_{j_0}$  with $\sigma=<w_0,w_1,w_2>$. Consider the 2-simplex $\tilde{\sigma}$ in $H=\R^2$ such that $\tilde{\sigma}^{(0)}=\{p(\tilde{w}_0),p(\tilde{w}_1),p(\tilde{w}_2)\}$. Consider a natural homeomorphism $h_{\sigma}: \partial  \sigma \to p^{-1}(\partial \tilde{\sigma})$ with $h_{\sigma}(w_i)=\tilde{w}_i$. Since $\sigma\cap X$ and $p^{-1}(\tilde{\sigma})\cap X$ are $S$-curves, by Whyburn theorem as above there is a homeomorphism  
$\varphi_{\sigma}: \sigma \cap X \to p^{-1}(\tilde{\sigma})\cap X$ which is an extension of $h_{\sigma}$. By use of $\varphi_{\sigma}$,  we have a desired piecewise embedding $T':X\to X$ with $d(T,T')<\epsilon$. 
Hence 
$X$ belongs to  0-$\mathcal{D}$. 
\end{proof}

\begin{prop} Let $X$ be the Sierpi\'{n}ski gasket. Then  $X$ belongs to  0-$\mathcal{DCR}$. 
\end{prop}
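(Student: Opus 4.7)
The plan is to verify the two defining conditions of Definition 5.1 separately, following the overall strategy of Proposition 6.7 for the Sierpi\'nski carpet, with the self-similar structure of the gasket replacing Whyburn's characterisation of $S$-curves.

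For $(0$-$\mathcal{CR})$, recall that $X = \bigcap_{i\in\N} X_i$, where each $X_i$ is the union of $3^i$ small filled equilateral triangles at level $i$; the union $G_i \subset X$ of the boundaries of those triangles is a $1$-dimensional polyhedron. Retracting each level-$i$ sub-gasket onto its enclosing triangular boundary defines a retraction $r_i : X \to G_i$ whose displacement is at most the diameter of a level-$i$ sub-gasket, which tends to zero as $i \to \infty$. Hence $X$ admits $\epsilon$-retractions onto polyhedra for every $\epsilon > 0$, and Theorem 6.2 gives $X \in 0$-$\mathcal{CR}$.

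For $(0$-$\mathcal{D})$, given $T \in C(X,X)$ and $\epsilon > 0$, I plan to construct a piecewise embedding $T' : X \to X$ with $d(T, T') < \epsilon$; such a map is automatically doubly $0$-dimensional by the remark preceding Proposition 6.3. By uniform continuity of $T$, I first choose $n$ so large that every level-$n$ sub-gasket $X_\sigma$ satisfies $\mathrm{diam}(X_\sigma), \mathrm{diam}(T(X_\sigma)) < \epsilon/4$, and I let $V_n$ denote the set of corner vertices of the level-$n$ triangulation (each interior vertex lies on exactly two level-$n$ sub-gaskets). For each level-$n$ triangle $\sigma = \langle v_0, v_1, v_2\rangle$ I then select a level-$m$ sub-gasket $X_{\tau(\sigma)}$ (with $m \gg n$) whose three primary corners lie close to $T(v_0), T(v_1), T(v_2)$ respectively and whose entire image lies in the $\epsilon/4$-neighbourhood of $T(X_\sigma)$; denote those corners $T'(v_0), T'(v_1), T'(v_2)$. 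Since both $X_\sigma$ and $X_{\tau(\sigma)}$ are homeomorphic copies of the standard Sierpi\'nski gasket, $T'|_{X_\sigma} : X_\sigma \to X_{\tau(\sigma)}$ can be defined as the natural self-similar homeomorphism realising the corner assignment, and gluing the pieces along their shared corners yields a continuous piecewise embedding $T' : X = \bigcup_\sigma X_\sigma \to X$ with $d(T, T') < \epsilon$.

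The hard part will be to enforce global compatibility of the target sub-gasket choices: at each interior corner $v$ shared between two adjacent level-$n$ sub-gaskets $X_\sigma, X_{\sigma'}$, the value $T'(v)$ must serve simultaneously as a primary corner of both $X_{\tau(\sigma)}$ and $X_{\tau(\sigma')}$. I plan to resolve this by first designating, for every $v \in V_n$, a point $T'(v) \in X$ close to $T(v)$ that is already a corner of several deep-level sub-gaskets clustered around $T(v)$, and only afterwards choosing $X_{\tau(\sigma)}$ among the finitely many level-$m$ sub-gaskets containing the pre-chosen triple $\{T'(v_0), T'(v_1), T'(v_2)\}$ as primary corners; the density of deep-level corner vertices in $X$ together with the freedom to enlarge $m$ makes this matching feasible. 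Combined with the first part, this gives $X \in 0$-$\mathcal{DCR}$, and the reconstruction conclusion then follows from Theorem 5.4.
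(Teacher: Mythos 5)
Your verification of $(0$-$\mathcal{CR})$ --- $\epsilon$-retractions of $X$ onto the level-$i$ subgraphs followed by Theorem 6.2 --- is exactly what the paper does and is fine. The gap is in $(0$-$\mathcal{D})$, precisely at the step you flag as ``the hard part'': the matching you describe is not feasible. The corner triples of level-$m$ sub-gaskets are rigid: the three primary corners of such a cell are three \emph{specific} level-$m$ vertices at mutual distance exactly $2^{-m}$ times the original side length, jointly bounding a common cell, and each level-$m$ vertex is a primary corner of at most two level-$m$ cells. Hence three points $T'(v_0),T'(v_1),T'(v_2)$ chosen \emph{independently} near $T(v_0),T(v_1),T(v_2)$ --- which may lie far apart, or may coincide when $T$ is far from injective --- are in general the corner triple of no sub-gasket at any level; density of deep-level vertices lets you approximate each $T(v_i)$ separately but gives no control over the exact joint combinatorial relation the triple must satisfy, and enlarging $m$ only makes the admissible triples more tightly clustered. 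If you instead choose the $T'(v)$ by a simplicial-approximation scheme (so that vertices whose stars have intersecting $T$-images do bound a common cell), you are forced to accept degenerate assignments $T'(v_i)=T'(v_j)$ whenever $T$ nearly collapses an edge of the level-$n$ structure, and then $T'|_{X_\sigma}$ cannot be ``the natural self-similar homeomorphism onto $X_{\tau(\sigma)}$''; unlike the polyhedral setting of Proposition 6.3, the gasket's cell structure provides no analogue of the folding maps with which to repair such degeneracies while keeping the restriction to $X_\sigma$ injective. So the construction of $T'$ is genuinely incomplete.

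The paper sidesteps all of this by never attempting to map sub-gaskets onto sub-gaskets. It takes a $\delta$-retraction $r_{\delta}:X\to P$ onto a subgraph $P$, chosen injective on each of the countably many segments whose union is $X$ (so $r_{\delta}$ is itself a piecewise embedding), approximates $r_{\delta}T|P:P\to P$ by a piecewise embedding $g$ via Proposition 6.3 (a graph is a $1$-dimensional manifold with branch structures, where the folding-map machinery is available), and sets $T'=gr_{\delta}$; this composition is a piecewise embedding with $d(T,T')<3\epsilon$. Any repair of your direct construction would likewise have to allow the image of a level-$n$ cell to be something more flexible than a single level-$m$ sub-gasket.
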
 
\begin{proof} Let $\epsilon >0$. 
We see that $X$ admits an $\epsilon$-retraction $r_{\epsilon}:X\to P$ onto a subgraph  $P$ of  $X$ and so $X$ belongs to 0-$\mathcal{CR}$. We will show that $X$ belongs to the class 0-$\mathcal{D}$. 
Let $T\in C(X,X)$ and $\epsilon >0$. Since $T$ is uniform continuous, we choose a sufficiently small positive number  $0<\delta<\epsilon$ such that $d(T,Tr_{\delta}) <\epsilon$, where $r_{\delta}$ is a $\delta$-retraction.  Note that $X$ is a countable union of segments $J_n~(n\in \N)$ in $\R^2$ and also we can choose such a retraction $r_{\delta}$ such that  $r_{\delta}|J_n$ is injective, and hence 
 it is a doubly 0-dimensional map. Consider the map 
$r_{\delta}T|P:P\to P$. Since $P$ is a graph and hence it is a 1-dimensional manifold with branch structures, 
 we have a piecewise embedding map $g:P\to P$ such that $d(g,r_{\delta}T|P)<\epsilon.$ Then 
$$d(T,gr_{\delta})\leq d(T,Tr_{\delta})+d(Tr_{\delta},r_{\delta}Tr_{\delta})+d(r_{\delta}Tr_{\delta},gr_{\delta})<3\epsilon$$ and $gr_{\delta}$ is a doubly 0-dimensional map.  
Hence 
$X$ belongs to  the class 0-$\mathcal{DCR}$. 
\end{proof}

\begin{prop} Let $X$ be any dendrite. Then  $X$ belongs to  0-$\mathcal{DCR}$. 
\end{prop}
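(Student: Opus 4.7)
The plan is to mirror the strategy of Proposition 6.8 (Sierpi\'nski gasket), adapted to the dendrite setting. For the 0-$\mathcal{CR}$ half: every compact dendrite admits, for each $\epsilon > 0$, an $\epsilon$-retraction onto a finite subtree (which is a polyhedron), so Theorem 6.2 applies directly.

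For 0-$\mathcal{D}$, given $T \in C(X,X)$ and $\epsilon > 0$, I would pick $\delta < \epsilon/3$ using uniform continuity of $T$, then choose a finite subtree $Q \subset X$ so fine that each component $D_i$ of $X \setminus Q$ has diameter less than $\delta/2$; let $p_i$ denote the attachment point of $\overline{D_i}$ on $Q$. The central step is to build a map $\tilde r : X \to Q$ that is $\delta$-close to the identity and is a piecewise embedding: set $\tilde r|_Q = \mathrm{id}_Q$, and for each $i$ fix a short arc $\alpha_i \subset Q$ with endpoint $p_i$ of diameter less than $\delta/2$ and define $\tilde r|_{\overline{D_i}} : \overline{D_i} \to \alpha_i$ to be a piecewise embedding with $\tilde r(p_i) = p_i$. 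Applying Proposition 6.3 to the graph $Q$ then produces a piecewise embedding $g : Q \to Q$ with $d(g, \tilde r T|_Q) < \epsilon/3$, and setting $T' := g \circ \tilde r$ gives a piecewise embedding $X \to Q \subset X$ (hence doubly 0-dimensional). Exactly as in Proposition 6.8, the triangle inequality
\[ d(T', T) \le d(g \tilde r, \tilde r T \tilde r) + d(\tilde r T \tilde r, T \tilde r) + d(T \tilde r, T) < \epsilon/3 + \delta + \epsilon/3 < \epsilon \]
yields $d(T', T) < \epsilon$, completing 0-$\mathcal{D}$.

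The hard part is the piecewise-embedding claim for $\tilde r|_{\overline{D_i}}$, since $\overline{D_i}$ itself can be a complicated compact dendrite (possibly with a Cantor set of extreme points), and in particular need not be a countable union of arcs. The classical fact that any dendrite has at most countably many branch points is what makes this tractable: it lets me write $\overline{D_i}$ as the union of a countable ``branch skeleton''---a nested sequence of finite subtrees $F_1 \subset F_2 \subset \cdots$ whose union exhausts the branch set and is dense in $\overline{D_i}$---together with the remaining extreme points. I would construct $\tilde r|_{\overline{D_i}}$ inductively by extending a piecewise-linear embedding of $F_n$ into $\alpha_i$ (fixing $p_i$) to one of $F_{n+1}$, choosing the images of the newly added pendant arcs so that the partial embeddings are uniformly Cauchy and eventually separate distinct extreme points of $\overline{D_i}$; the continuous limit is then injective on each edge of every $F_n$ and on the closed set of extreme points, giving the countable closed decomposition witnessing the piecewise-embedding property.
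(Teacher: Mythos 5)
Your overall route is the same as the paper's: you get 0-$\mathcal{CR}$ from $\epsilon$-retractions onto finite subtrees via Theorem 6.2, and you attack 0-$\mathcal{D}$ by approximating $T$ by $g\circ\tilde r$, where $\tilde r$ pushes $X$ onto a fine subtree $Q$ and $g$ is a piecewise embedding of the graph $Q$ supplied by Proposition 6.3. You are also right to distrust the step you flag: the paper's own proof disposes of it by asserting that a dendrite is a countable union of arcs $J_n$ with the retraction injective on each $J_n$, and that assertion fails for dendrites with uncountably many end points (an arc $J$ in a dendrite $X$ meets the end point set of $X$ only in the two end points of $J$, since an interior point of $J$ has order at least $2$ in $X$; hence a countable union of arcs contains only countably many end points, and a dendrite with a Cantor set of end points, or the Wa\.{z}ewski universal dendrite, is not such a union). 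So you have isolated the genuine crux.

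Your patch, however, does not close the gap. The end point set of a dendrite is in general not closed (end points can accumulate on a point of order $2$), so ``injective on the closed set of extreme points'' is not available as stated; and if you pass to the closure, that closure can be all of $\overline{D_i}$ (in the universal dendrite the end points are dense), a one-dimensional continuum on which no map into the arc $\alpha_i$ is injective. Even working with the end point set $E$ itself, which is indeed always $0$-dimensional, the definition of piecewise embedding requires a countable cover of $\overline{D_i}$ by sets \emph{closed in} $\overline{D_i}$ on each of which $\tilde r$ is injective; your finite trees $F_n$ carry at most countably many end points, so the uncountably many remaining ones must be caught by further closed pieces, and ``the partial maps eventually separate distinct extreme points'' produces no such pieces --- note that $E$ need not be $\sigma$-compact, so it cannot simply be exhausted by compact subsets of itself. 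Observe also that any piecewise embedding is countable-to-one, so $\tilde r|_{\overline{D_i}}:\overline{D_i}\to\alpha_i$ would have to be countable-to-one, already a nontrivial demand when $\overline{D_i}$ has uncountably many end points and contains no free arc. (A minor slip: a finite tree $F_n$ with a branch point admits no embedding into $\alpha_i$; you presumably mean a PL map injective on each edge.) To complete the argument one needs either an explicit countable-to-one map of an arbitrary dendrite onto a tree together with the required countable closed decomposition, or a direct verification that $g\circ\tilde r$ is doubly $0$-dimensional without the piecewise-embedding crutch; neither your sketch nor the paper's two-line argument supplies this.
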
 

\begin{proof} Since $X$ is a dendrite, we see that  for each $\epsilon >0$, $X$ admits an $\epsilon$-retraction $r_{\epsilon}:X\to P$ onto a subtree $P$ of  $X$. Hence $X$ belongs to 0-$\mathcal{CR}$ (see also \cite{KOU16}). We will show that $X$ belongs to the class 0-$\mathcal{D}$.  Note that $X$ is a countable union of arcs $J_n~(n\in \N)$ and we can choose such a retraction $r_{\epsilon}$ such that  $r_{\epsilon}|J_n$ is injective and hence 
 it is a doubly 0-dimensional map. By the same arguments as the proof of Proposition 6.7, we see that 
$X$ belongs to  0-$\mathcal{D}$. 
\end{proof}

\begin{cor} Let $X$ be one of the following spaces: PL-manifold, manifold with branch structures, Menger manifold, Sierpi\'nski carpet, Sierpi\'nski gasket and dendrite. Then  $X$ is a reconstruction space of dynamical systems. 
\end{cor}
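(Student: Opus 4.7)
The plan is to invoke the Main Theorem (Theorem 5.4), which states that any compact metric space $X$ belonging to the class 0-$\mathcal{DCR}$ is a reconstruction space of dynamical systems. So the entire task reduces to checking, for each of the six listed classes, that the space lies in 0-$\mathcal{DCR}$; in fact each of these memberships has already been established in the preceding propositions, so the corollary is essentially an assembly step.

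Concretely, I would run through the list as follows. First, for PL-manifolds and for manifolds with branch structures, I would quote Proposition 6.3 directly (noting that every PL-manifold is trivially a manifold with branch structures, taking the index set for the attaching data to be empty). For a $d$-dimensional Menger manifold I would invoke Proposition 6.5, which gives an even stronger conclusion — that $I_{T,f}^S$ can be taken to be a genuine embedding rather than merely a trajectory-embedding — and in particular yields membership in 0-$\mathcal{DCR}$. For the Sierpiński carpet and the Sierpiński gasket I would cite Propositions 6.6 and 6.7 respectively, and for dendrites Proposition 6.8. Each of these propositions verifies the two defining conditions $(0\text{-}\mathcal{D})$ and $(0\text{-}\mathcal{CR})$ of Definition 5.1 by an explicit approximation argument, so no further work on 0-$\mathcal{DCR}$ membership is required.

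Having 0-$\mathcal{DCR}$ membership in hand for each listed space $X$, the conclusion that $X$ is a reconstruction space then follows by applying the final sentence of the Main Theorem: there is a $G_\delta$-dense set $E\subset C(X,X)\times C(X,\R)$ such that for $(T,f)\in E$ the infinite delay map $I_{T,f}\colon(X,T)\to(\R^{\N},\sigma)$ satisfies $I_{T,f}\colon[O(T)]\cong[O(\sigma_{T,f})]$ and $h(T)=h(\sigma_{T,f})$, which are precisely conditions (1) and (2) of Definition 3.4.

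Since every step is a citation of an already-proved proposition, there is no genuine obstacle to overcome here — the proof is a one-paragraph assembly. The only minor point worth flagging is that in the Menger-manifold case one obtains the extra bonus of honest embeddings (because Proposition 6.5 replaces $C(M,M)$ by the $G_\delta$-dense subset $e(M,M)$ of embeddings), which I would briefly mention so the reader sees that the uniform statement of the corollary does not obscure this sharpening.
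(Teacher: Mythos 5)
Your proposal is correct and follows exactly the route the paper intends (the paper gives no separate proof of this corollary, since it is precisely the assembly of the Main Theorem 5.4 with the membership results of Section 6). The only slip is a reference off by one: the Menger manifold result is Proposition 6.4, not 6.5 (6.5 is the $S$-curve lemma); otherwise the citations and the logic are exactly as in the paper.
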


\section{Application: Reconstructions of one-sided dynamical systems
from nonlinear time series analysis} 
There have been attempts to reconstruct dynamical models directly from data, and nonlinear methods for the analysis of time series data have been extensively investigated. This research is an inverse problem to the numerical analysis of dynamical systems model, in that it seeks to identify models that fit data.

Time-delay embedding is well-known for nonlinear time series analysis, and it is used
in several research fields such as physics, meteorology, informatics, neuroscience 
and so on. In laboratories, experimentalists are 
striving to find principles of phenomenons from a lot of data and they use delay embedding for reconstructing the dynamical systems from experimental time series. For smooth dynamical systems on manifolds, the celebrated Takens'
reconstruction theorem ensures validity of the delay embedding analysis. Takens' 
 theorem means that many dynamics theoretically can be  reconstructed  by the
delay coordinate system, more precisely almost all (two-sided) dynamical systems can be 
reconstructed from observation maps (see Takens \cite{Tak81,Tak02} and Sauer,Yorke and Casdagli \cite{SYC91}). So Takens'  theorem is the basis for nonlinear time series analysis and  form a bridge between the theory of nonlinear differential dynamical systems on smooth manifolds and nonlinear time series analysis. 

However, unfortunately the systems may not to be two-sided and moreover, they may not be  systems on manifolds. Recently we freqently encounter a situation where  we have to study dynamical systems of spaces that cannot have differential structure. In natural sciences and physical engineering, there has been an increase in importance of fractal sets and more complicated spaces, and also in mathematics, the dynamical properties and stochastic analysis of such spaces have been studied by many authors. Our reconstruction theorem theoretically ensures validity of the delay embedding analysis for (topological) dynamical systems on such complicated compact metric spaces,  
i.e., almost all one-sided dynamical systems $(X,T)$ of  spaces $X$ belonging to 0-$\mathcal{DCR}$ can be reconstructed from observation maps $f:X\to \R$ in the sense of  "trajectory embedding", i.e., the delay observation
map  $$I_{T,f}^{(0,1,2,\cdots,k)}: (X,T) \to (I_{T,f}^{(0,1,2,\cdots,k)}(X),\sigma_{T,f}^{(0,1,\cdots k)})$$
is a trajectory-embedding for a natural number $k \geq 2\dim X$, and so  the dynamical system
$$(I_{T,f}^{(0,1,2,\cdots,k)}(X),\sigma_{T,f}^{(0,1,2,\cdots,k)})$$ may reflect many dynamical properties of the original dynamical system $(X,T)$. Especially, $$I_{T,f}:[O(T)]\cong [O(\sigma_{T,f}^{(0,1,2,\cdots,k)})]~ \mbox{and}~ h(T)=h(\sigma_{T,f}^{(0,1,2,\cdots,k)}).$$  In laboratories, experimentalists may  
understand how the system $(X,T)$ will go in the future in the sense of orbital 
classification  from the analysis of experimental time series. More precisely, for $x,y \in X$, if one can find a time $n\in \N$ such that $$|\{i\in \N|~fT^i(x)=fT^i(y), 0\leq i \leq n \}|=2\dim X+1,$$ then $T^j(x)=T^j(y)$ for $j\geq n$ and hence $[o_T(x)]=[o_T(y)]$.

For more general case where a $d$-dimensional compact metric space $X$ does not belong to 0-$\mathcal{DCR}$ and $(X,T)$ is any one-sided dynamical system, we have an  extension $(\mu^d,T')$ of $(X,T)$, where $\mu^d$ is the $d$-dimensional Menger compactum containing $X$ and $T':\mu^d \to \mu^d$ is an extension of $T$  ~(see \cite{Bes88}). By Proposition 6.4, there is a possibility to be able to investigate the approximate properties of the dynamical system $(X,T)$ by use of time-delay embedding of the dynamical system $(\mu^d,T')$.  \\

{\bf Acknowledgments:}  This work was supported by JSPS KAKENHI
Grant Number JP19K03485.

\end{document}